\documentclass[11pt]{article}
\usepackage{amsmath,amssymb,amsfonts,amsthm,mathtools}
\usepackage[colorlinks=true]{hyperref}
\usepackage[nameinlink]{cleveref}
\usepackage[top=25mm,bottom=25mm,left=25mm,right=25mm]{geometry}
\usepackage{microtype}
\usepackage{comment}
\usepackage{xcolor}
\hypersetup{
  colorlinks=true,
  linkcolor=blue,
  citecolor=blue,
  urlcolor=cyan
}
\usepackage{tikz}
\usetikzlibrary{arrows.meta,calc,decorations.pathreplacing}
\theoremstyle{plain}
\newtheorem{theorem}{Theorem}[section]
\newtheorem{lemma}[theorem]{Lemma}
\newtheorem{corollary}[theorem]{Corollary}
\newtheorem{proposition}[theorem]{Proposition}
\theoremstyle{definition}
\newtheorem{definition}[theorem]{Definition}
\newtheorem{remark}[theorem]{Remark}
\newtheorem{problem}[theorem]{Problem}
\newtheorem{example}[theorem]{Example}

\newtheorem{innerthm}{Theorem}
\newenvironment{reptheorem}[1]{%
  \renewcommand\theinnerthm{\ref*{#1}}%
  \innerthm
}{\endinnerthm}

\newcommand{\F}{\mathbb F}
\newcommand{\mA}{\mathcal A}
\newcommand{\mM}{\mathcal M}
\newcommand{\LL}{\mathcal L}
\newcommand{\supp}{\operatorname{supp}}
\newcommand{\wt}{\operatorname{wt}}

\newcommand{\Sim}{\operatorname{Sim}}
\newcommand{\bqbinom}[2]{\genfrac{[}{]}{0pt}{}{#1}{#2}_q}
\DeclareMathOperator{\diam}{diam}
\DeclareMathOperator{\dist}{dist}
\DeclareMathOperator{\rank}{rank}

\title{The Asymptotic Number of Anticodes in Hamming Spaces}
\author{
Anna-Lena Horlemann\thanks{University of St.Gallen, Switzerland. \emph{E-mail}: \texttt{anna-lena.horlemann@unisg.ch}.}
\and
Ananthakrishnan Ravi\thanks{Delft University of Technology, Netherlands. Supported by an NWO open competition grant (OCENW.M.22.090). \emph{E-mail}: \texttt{a.ravi@tudelft.nl}.}
}
\date{}

\begin{document}
\maketitle

\begin{abstract}
In the Hamming space $\mathbb{F}_q^n$, a $d$-anticode is a subset in which every pair of points has Hamming distance at most $d$, or equivalently, a set of diameter at most $d$.
Let $m_q(n,d)$ denote the maximum size of a $d$-anticode and let $\mathcal{A}_q(n,d)$ be the family of all $d$-anticodes.
We show that for fixed $q$, as $n\to\infty$, the total number of $d$-anticodes satisfies
\[
|\mathcal{A}_q(n,d)|=2^{(1+o(1))m_q(n,d)}
\]
for $4\le d\le n-1$, except when $q=2$ and $d=n-1$.

We also count linear $d$-anticodes, that is, subspaces of $\mathbb F_q^n$ with diameter at most $d$.  
For fixed $q$ and $d$, their number is polynomial in $n$. 
For growing $d$, we determine the exponential order as $q^{\Theta_q(d^2+d\log_q(n/d))}$ in the sublinear regime and $q^{\Theta(n^2)}$ when $d/n\to\alpha\in(0,1)$. 
In the near-full regime $n-d=o(n)$, we obtain the sharper count $q^{(1/4+o(1))n^2}$. 
A random-generator construction further improves the binary lower bound for every fixed linear diameter.

\end{abstract}

\section{Introduction}\label{sec:intro}
Anticodes are fundamental objects in coding theory and extremal combinatorics. 
Whereas error-correcting codes are designed to keep codewords sufficiently far apart, anticodes consist of sets of words whose pairwise distances are bounded from above. 
This simple definition gives rise to a rich class of combinatorial structures whose properties depend on the underlying metric and the imposed diameter constraint.

The study of anticodes has a long history, particularly in connection with classical extremal problems. 
A central question has been to determine the maximum cardinality of an anticode of prescribed diameter. 
In Hamming spaces, classical diametric theorems determine the maximum size of such families and describe the geometry of extremal configurations in a wide range of parameter regimes
\cite{Kleitman1966,AhlswedeKatona1977,AhlswedeKhachatrian1998}.

Beyond their intrinsic combinatorial interest, anticodes play an important role in coding theory. In particular, the code--anticode bound relates the minimum distance of a code to the diameter and cardinality of an anticode, generalizing the sphere-packing bound in suitable metric spaces \cite{AhlswedeAydinianKhachatrian2001}.

Alongside these extremal questions, related enumeration questions have been studied for error-correcting codes in various metrics and algebraic settings (see, e.g., \cite{moduleDensity,DongManiZhao2023,densities,tLeeCodes} and references therein).
Such results illustrate the value of enumeration in understanding large families of combinatorial objects. 
However, codes and anticodes satisfy fundamentally different distance constraints, so enumeration results for codes do not directly determine the number of anticodes.

While extremal results determine how large an anticode can be, a different question is how many anticodes satisfy a prescribed diameter constraint. 
In this paper, we study this question asymptotically. 
Asymptotic enumeration can reveal how the number of anticodes grows with the parameters of the ambient space and thus provide a broader perspective on the structure of these families.

\textit{In this work}, we count anticodes in the ambient space $\F_q^n$ (where $q$ is a prime power), equipped with the Hamming distance, which counts the coordinates in which two vectors differ. 
For a set $A\subseteq \F_q^n$, we write $\diam(A)$ for its diameter.
A set of points with diameter at most $d$ is called a \emph{$d$-anticode}.
Let $\mA_q(n,d)$ be the family of all $d$-anticodes. 
We further distinguish between general anticodes and linear ones, i.e., those that form a subspace of $\F_q^n$. 
In particular, linear anticodes form a subclass of general anticodes. 

We first define the maximum possible size of a $d$-anticode in $\F_q^n$ as
\[
  m_q(n,d):=\max\{|A|:A\subseteq \F_q^n,\ \diam(A)\le d\}.
\]
We call a $d$-anticode of size $m_q(n,d)$ a \emph{maximum} $d$-anticode.
The exact value of $m_q(n,d)$ is known from the diametric theorems of Kleitman \cite{Kleitman1966} in the case $q=2$, and Ahlswede--Khachatrian for $q\ge3$ \cite{AhlswedeKhachatrian1998}; see \Cref{thm:kleitman,thm:ahlswede-khachatrian}.
Since every subset of a maximum $d$-anticode is again an anticode, the total number of $d$-anticodes is at least $2^{m_q(n,d)}$.
We show that this lower bound is asymptotically tight for all $4\le d\le n-1$, except when $q=2$ and $d=n-1$.

Let $\mM_q(n,d)$ be the family of \emph{inclusion-maximal} $d$-anticodes.
Since every anticode lies in an inclusion-maximal one, we have
\begin{equation}\label{eq:intro-basic-reduction}
  |\mA_q(n,d)|\le |\mM_q(n,d)|2^{m_q(n,d)}.
\end{equation}
Thus, for general anticodes, we focus on estimating $|\mM_q(n,d)|$.

Our initial approach to count the number of maximal anticodes was to combine the hypergraph container method with stability results for extremal anticodes. 
For $q=2$, Frankl's stability theorem \cite{Frankl2017} makes this approach effective in certain regimes.
For $q\ge3$, stability is known when the codiameter $n-d$ is fixed, but we are not aware of a stability result that applies uniformly over the range of diameters considered here.

Instead, using the Helly-type theorem for Hamming balls of Alon, Jin, and Sudakov \cite{AlonJinSudakov2024}, we show that every inclusion-maximal anticode is determined by at most $2^{d+1}$ of its own points.
This gives our first main result:

\begin{reptheorem}{thm:nonlinear-main}
Fix a prime power $q$, and let $n\to\infty$. Let $d=d(n)$.
For $4\le d\le n-1$, excluding the case $q=2$ and $d=n-1$,
\[
  |\mA_q(n,d)|
  =
  2^{(1+o(1))m_q(n,d)};
\]
for $d\in\{1,2,3\}$,
\[
  |\mA_q(n,d)|=2^{\Theta_q(n)};
\]
and at the endpoint $q=2$ and $d=n-1$,
\[
  |\mA_2(n,n-1)|
  =
  3^{2^{n-1}}
  =
  2^{(\log_2 3)m_2(n,n-1)}.
\]
\end{reptheorem}

We next turn to linear anticodes.
As in the general setting, it is natural to begin with the extremal objects.
In the linear setting, these are the maximum linear $d$-anticodes, namely those attaining the dimension bound $\dim C\le d$.
Their structure follows from the classification of \textit{optimal} linear Hamming anticodes: for $q\ne2$, this was obtained in~\cite{Ravagnani2016}, while for $q=2$ the classification was subsequently obtained in~\cite{DiGiustoGorlaRavagnani2025}.
For counting purposes, the natural parameter turns out to be the support.
We therefore restrict a linear anticode to the coordinates in its support and then count the resulting \textit{full-support} linear codes.
This gives our second result:

\begin{reptheorem}{thm:linear-main}

Fix a prime power $q$, let $n\to\infty$, and let
$0\le d=d(n)\le n$ be an integer. Let
\[
  \LL_q(n,d):=\{C\le\F_q^n:\diam(C)\le d\}.
\]
\begin{enumerate}
  \item If $d$ is fixed, then there is an integer $\sigma_q(d)$ satisfying $d\le \sigma_q(d)\le
    \left\lfloor\frac{qd}{q-1}\right\rfloor$ such that
  \[
    |\LL_q(n,d)|
    =
    \Theta_{q,d}\!\left(n^{\sigma_q(d)}\right).
  \]
  In particular, if $q>d+1$, then $\sigma_q(d)=d$.

  \item If $d\to\infty$ and $d=o(n)$, then
  \[
    |\LL_q(n,d)|
    =
    q^{\Theta_q\!\left(d^2+d\log_q(n/d)\right)}.
  \]
 \item If $d/n\to\alpha$, where $0<\alpha<1$, and
  $\gamma=\min\{1,q\alpha/(q-1)\}$, then
  \[
    q^{(\alpha^2/4+o(1))n^2}
    \le
    |\LL_q(n,d)|
    \le
    q^{(\gamma^2/4+o(1))n^2}.
  \]

  \item If $n-d=o(n)$, then
  \[
    |\LL_q(n,d)|
    =
    q^{(1/4+o(1))n^2}.
  \]
\end{enumerate}

\end{reptheorem}

The lower bound in the linear diameter regime is obtained by taking subspaces supported on exactly $d$ coordinates.
For $q=2$, this choice of support is not optimal.
Random generator matrices allow us to use larger supports while still keeping every codeword's weight at most $d$.
More precisely, we define a function $\phi(\alpha)$ and
prove in \Cref{prop:binary-random-linear-lower} that
\[
  |\LL_2(n,\lfloor\alpha n\rfloor)|
  \ge
  2^{(\phi(\alpha)+o(1))n^2}.
\]
This strictly improves the exponent $\alpha^2/4$ for every
$0<\alpha<1$.

The paper is organized as follows.
In \Cref{sec:prelim}, we collect the extremal, structural, and asymptotic tools used throughout the paper.
In \Cref{sec:nonlinear}, we count general anticodes by passing to inclusion-maximal anticodes and applying the Helly-type theorem for Hamming balls.
In \Cref{sec:linear}, we count linear anticodes by first choosing their support and then counting full-support subspaces on that support.
We treat fixed, sublinear, and linear diameters and the near-full regime separately, and conclude the section with the probabilistic improvement for $q=2$.
Finally, in \Cref{sec:further-directions}, we place our results in broader context, and discuss extensions to growing field size and to codes with restricted distances.

\section{Preliminaries}\label{sec:prelim}

In this section, we collect the building blocks for our counting arguments. 
For general anticodes, we need the classical diametric theorems and the Helly-type theorem for Hamming balls. 
For linear anticodes, we use support bounds together with estimates for Gaussian binomial coefficients. 
We also record the asymptotic estimates that will be used repeatedly.

Let $q$ be a prime power and $\F_q$ the finite field with $q$ elements. 
For $n\ge1$, write $[n]=\{1,\ldots,n\}$, and set $[0]=\varnothing$.
All logarithms without subscripts are natural logarithms. 
We express the counts of general anticodes in base $2$ and those of linear anticodes in base $q$, reflecting the enumeration of subsets and subspaces, respectively.
For fixed $q$, the change of base only rescales logarithms by a constant.
Moreover, we use the convention that $\binom{n}{s}=0$ if $s>n$ or $s<0$.

\begin{definition}
For two vectors $x,y\in \F_q^n$, the \emph{Hamming distance}
$\dist(x,y)$ is the number of coordinates in which they differ.
The \emph{Hamming weight} of $x\in\F_q^n$ is
\[
\wt(x):=\dist(x,0).
\]
The metric space $(\F_q^n,\dist)$ is called a \emph{Hamming space}.
\end{definition}

For $x\in\F_q^n$ and $0\le r\le n$, let
\[
  B(x,r):=\{y\in\F_q^n:\dist(x,y)\le r\}
\]
be the \emph{Hamming ball} of radius $r$ around $x$.
Its size is independent of $x$:
\[
  |B(x,r)|=\sum_{i=0}^{r}\binom ni(q-1)^i.
\]

Next, we state the diametric theorems.
We will use these in the small- and near-full-diameter regimes.
For the rest of our counting argument, we use only the elementary lower bound obtained from a Hamming ball of radius $\lfloor d/2\rfloor$.

\begin{theorem}[Kleitman \cite{Kleitman1966}, Ahlswede--Katona \cite{AhlswedeKatona1977}]
\label{thm:kleitman}
Let $0\le d<n$. In the binary Hamming space $\F_2^n$,
\[
  m_2(n,d)
  =
  \begin{cases}
    \displaystyle \sum_{j=0}^{r}\binom nj,
      & \text{if } d=2r, \\[3mm]
    \displaystyle 2\sum_{j=0}^{r}\binom{n-1}{j},
      & \text{if } d=2r+1.
  \end{cases}
\]
For even $d$, the bound is attained by a Hamming ball of radius $r$.
For odd $d$, it is attained by the union of two Hamming balls of
radius $r$ whose centres are at distance one.
\end{theorem}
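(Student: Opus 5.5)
The plan is to prove matching lower and upper bounds. The upper bound reduces, via down-compressions, to Katona's theorem on $t$-intersecting families. We identify $x\in\F_2^n$ with its support $\supp(x)\subseteq[n]$, so that $\dist(x,y)=|\supp(x)\triangle\supp(y)|$.

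\emph{Lower bound.} For $d=2r$, the ball $B(0,r)$ has diameter at most $2r$ by the triangle inequality, and it has $\sum_{j\le r}\binom nj$ points. For $d=2r+1$, take
\[
A=\{x:\wt(x_2,\dots,x_n)\le r\}=B(0,r)\cup B(e_1,r).
\]
Two points of $A$ differ in at most $2r$ of the last $n-1$ coordinates and possibly in the first coordinate, so $\diam(A)\le 2r+1$. Its size is $2\sum_{j\le r}\binom{n-1}{j}$. This also gives the stated extremal configurations.

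\emph{Step 1: compression.} For a coordinate $i$ and $A\subseteq\F_2^n$, define the down-shift $S_i$ as follows. Send $x\mapsto x-e_i$ if $x_i=1$ and $x-e_i\notin A$; otherwise leave $x$ fixed. Then $|S_i(A)|=|A|$. The main claim is $\diam(S_i(A))\le\diam(A)$, checked by the usual case analysis on a pair $x',y'\in S_i(A)$ with preimages $x,y$.
\begin{itemize}
\item If both points moved or neither moved, the $i$-th coordinates agree or disagree exactly as before, so $\dist(x',y')=\dist(x,y)$.
\item Otherwise, say $x$ moved and $y$ did not. The only problematic subcase is $x_i=y_i=1$, so that $x'_i=0\ne y'_i=1$. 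Then $y$ stayed, so $y-e_i\in A$, and $\dist(x',y')=\dist(x,y)+1=\dist(x,y-e_i)$, which is at most $\diam(A)$ because $x,y-e_i\in A$.
\end{itemize}
Each $S_i$ that changes $A$ strictly decreases $\sum_{x\in A}\wt(x)$, so iterating terminates. The result is a down-set of the same size and no larger diameter. I expect this verification to be the main technical point, but it is routine once the cases are written out.

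\emph{Step 2: reduction to intersecting families.} Let $A$ be a down-set with $\diam(A)\le d$, and take $x,y\in A$. Since $A$ is a down-set, $y\setminus x\in A$. Because $x$ and $y\setminus x$ are disjoint,
\[
\dist(x,y\setminus x)=|x|+|y\setminus x|=|x\cup y|\le d .
\]
So every pair of supports in $A$ has union of size at most $d$. Passing to complements, the family $\{[n]\setminus x:x\in A\}$ is $t$-intersecting with $t=n-d\ge1$.

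\emph{Step 3: Katona's theorem.} Katona's theorem bounds the size of a $t$-intersecting family $\mathcal F\subseteq2^{[n]}$ as follows:
\begin{itemize}
\item if $n+t=2s$, then $|\mathcal F|\le\sum_{j\ge s}\binom nj$;
\item if $n+t=2s-1$, then $|\mathcal F|\le\sum_{j\ge s}\binom nj+\binom{n-1}{s-1}$.
\end{itemize}
Translating back, $d=2r$ gives $\sum_{j\le r}\binom nj$. For $d=2r+1$ we get
\[
\sum_{j\le r}\binom nj+\binom{n-1}{r}=2\sum_{j\le r}\binom{n-1}{j},
\]
by Pascal's rule. This matches the lower bound.

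If Katona's theorem is to be proved rather than cited, I would again use shifting together with induction on $n$. For a $t$-intersecting family, split by whether the element $n$ is present. The part containing $n$ stays $t$-intersecting after deleting $n$. The part avoiding $n$, after shifting, combines with it into a $(t+1)$-intersecting or a suitably paired family on $[n-1]$. This is the standard Katona/Kleitman induction, and it closes with the binomial identity above.
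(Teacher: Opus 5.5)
Your proof is correct, and it takes a different route from the paper. The paper does not prove this theorem: it cites Kleitman for even $d$, and Ahlswede--Katona or alternatively Frankl's parity argument for odd $d$.

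Your constructions realize exactly the stated extremal configurations. The down-shift case analysis is complete; the one delicate subcase, $x_i=y_i=1$ with $y-e_i\in A$, is handled correctly by comparing with $\dist(x,y-e_i)$. The translation through Katona's theorem is right in both parities, including the Pascal step for $d=2r+1$.

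Your route gains uniformity. Compression plus complementation turns the diameter condition into $(n-d)$-intersection, and a single theorem then handles both parities. The cost is that all the depth now sits in Katona's theorem, which is as hard as Kleitman's, so you have a clean reduction rather than a proof from first principles.

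For the odd case, the parity route the paper mentions is cheaper once the even case is known. Split a $(2r+1)$-anticode $A$ into its even-weight and odd-weight vectors. Inside each class all distances are even, hence at most $2r$. Deleting the last coordinate is injective on each class and does not increase distances. Therefore $|A|\le 2m_2(n-1,2r)=2\sum_{j\le r}\binom{n-1}{j}$.

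One caution about your closing paragraph: the suggested induction for Katona's theorem does not work as written. Deleting $n$ from the members that contain $n$ preserves only $(t-1)$-intersection in general. Take Katona's extremal family $\{F:|F|\ge (n+t)/2\}$ with $n+t$ even. Two sets of size $(n+t)/2$ whose union is $[n]$ and which both contain $n$ meet in exactly $t$ elements, one of which is $n$, so after deleting $n$ they meet in only $t-1$. Proofs of Katona's theorem need a genuinely stronger ingredient, such as Katona's shadow theorem for $t$-intersecting uniform families. You should rely on the citation, as your main argument already does.
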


Kleitman~\cite{Kleitman1966} proved the even-diameter case. For the
odd-diameter case, see Ahlswede and
Katona~\cite[Theorem~2.5]{AhlswedeKatona1977}; it also follows from
the even-diameter case by Frankl's parity
argument~\cite[Proposition~4.1]{Frankl2017}.

Let $q\ge3$ and, for
$0\le i\le \lfloor d/2\rfloor$, define
\[
  K_i(n,d)
  :=
  \left\{
    x\in\F_q^n:
    \left|\{j\le n-d+2i:x_j\ne0\}\right|\le i
  \right\}.
\]
Thus, the first $n-d+2i$ coordinates lie in a Hamming ball in $\F_q^{n-d+2i}$ of radius $i$ with centre zero,
while the remaining $d-2i$ coordinates are unrestricted.  In particular,
\[
  |K_i(n,d)|
  =
  q^{d-2i}
  \sum_{j=0}^{i}\binom{n-d+2i}{j}(q-1)^j .
\]

\begin{theorem}[Ahlswede--Khachatrian \cite{AhlswedeKhachatrian1998}]\label{thm:ahlswede-khachatrian}
Let $q\ge3$ and $0\le d<n$.  

If $d=n-1$, then
\[
  m_q(n,d)=q^{n-1}=|K_0(n,d)|.
\]

If $0\le d\le n-2$, let
\[
r=
\min\left\{
\left\lfloor\frac d2\right\rfloor,\,
\left\lfloor\frac{n-d-2}{q-2}\right\rfloor
\right\}.
\]
Then
\[
m_q(n,d)
=
|K_r(n,d)|
=
q^{d-2r}
\sum_{j=0}^r
\binom{n-d+2r}{j}(q-1)^j.
\]
\end{theorem}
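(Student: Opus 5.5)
The plan is to reduce the $q$-ary diametric problem to a weighted intersection problem for subsets of $[n]$, and then invoke the Ahlswede--Khachatrian complete intersection theorem in its product-measure form. The matching lower bound comes from checking the families $K_i(n,d)$ directly.

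\emph{Step 1 (lower bound).} Each $K_i(n,d)$ is a $d$-anticode. Two of its points have at most $i$ nonzero entries each among the first $n-d+2i$ coordinates, so they differ in at most $2i$ of those coordinates, and in at most $d-2i$ of the remaining ones. Hence their distance is at most $d$. Next compute the ratio $|K_{i+1}(n,d)|/|K_i(n,d)|$. One should find that it is at least $1$ exactly when $i<\lfloor (n-d-2)/(q-2)\rfloor$, subject to $i+1\le \lfloor d/2\rfloor$. This shows that $\max_i |K_i(n,d)|$ is attained at the $r$ in the statement. The case $d=n-1$ corresponds to $r=0$, with $|K_0|=q^{n-1}$.

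\emph{Step 2 (compression).} Let $A$ be a maximum $d$-anticode. Apply coordinatewise compressions that move a symbol in coordinate $j$ towards $0$ whenever the compressed word is not already in $A$, together with symbol-permutation compressions among the nonzero symbols. The aim is to show that these operations preserve $|A|$ and do not increase $\diam(A)$. Their stable limit should be \emph{support-closed}: whether $x\in A$ depends only on $\supp(x)$, and the family $\mathcal S=\{\supp(x):x\in A\}$ is a down-set. For such $A$, two words with supports $S,T$ and generic nonzero entries differ on all of $S\cup T$. So $\diam(A)\le d$ becomes $|S\cup T|\le d$ for all $S,T\in\mathcal S$, and
\[
|A|=\sum_{S\in\mathcal S}(q-1)^{|S|}.
\]
This is the step I expect to be the main obstacle. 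The difficulty is that a single compression can create a pair at distance larger than $d$ between symbols that were previously equal. So the order of operations must be chosen carefully, for example by compressing simultaneously over all nonzero symbols in a coordinate, or by using Ahlswede--Khachatrian-style ``pushing--pulling'' arguments. Failing that, one can instead prove the inequality $|A|\le \max$ over support-closed anticodes by a weight-shifting argument.

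\emph{Step 3 (reduction to intersection).} Pass to complements $\mathcal F=\{[n]\setminus S:S\in\mathcal S\}$. This is a $t$-intersecting up-set with $t=n-d$, and
\[
|A|=q^n\mu_p(\mathcal F), \qquad \mu_p(F)=p^{|F|}(1-p)^{n-|F|},\quad p=1/q.
\]
The Ahlswede--Khachatrian theorem for $\mu_p$ and $t$-intersecting families states that $\mu_p(\mathcal F)$ is maximized by a Frankl family
\[
\mathcal F_i=\{F:|F\cap[t+2i]|\ge t+i\},
\]
with optimal index $i=\lfloor (t-1)p/(1-2p)\rfloor$ when $p<1/2$. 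Substituting $p=1/q$ gives $\lfloor (n-d-1)/(q-2)\rfloor$, up to the boundary convention, which should be rechecked against the $(n-d-2)$ in the statement. One also caps at $\lfloor d/2\rfloor$ so that $t+2i\le n$.

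\emph{Step 4 (matching).} Finally, identify the complements of $\mathcal F_i$, lifted back to words, with $K_i(n,d)$ after a permutation of coordinates. Combined with Step 1, this completes the proof.
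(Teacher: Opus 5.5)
The paper gives no proof of this statement. It quotes Ahlswede--Khachatrian and only checks the dictionary $t=n-d$, with their distinguished symbol $q-1$ relabelled as $0$. Your reduction to the weighted $t$-intersection problem is therefore a different route, and Steps~1, 3 and~4 are essentially fine. Step~2, however, has a genuine gap, and the mechanism you propose cannot close it.

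\emph{Compressions cannot produce support-closedness.} They preserve $|A|$. For $q\ge3$, no $2$-element set is support-closed with down-closed support family: the empty support contributes one word and every other support contributes at least $q-1\ge2$. So $\{0,e_1\}$, with $e_1=(1,0,\dots,0)$, is stable under zero-shifts and under shifts towards the symbol $1$, yet never becomes support-closed.

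\emph{The danger you anticipate does not occur.} Let $S_{j,a}$ replace $x_j=a\ne0$ by $0$ unless the new word is already in $A$. This shift preserves $\diam(A)\le d$. If $x$ moves to $x'$ while some $y$ with $y_j=a$ stays, then $y$ with its $j$-th entry zeroed lies in $A$, and it is at the same distance from $x$ as $y$ is from $x'$. In all other cases no distance increases. Each nontrivial shift lowers $\sum_{x\in A}\wt(x)$, so you reach a maximum anticode $A$ closed under zeroing coordinates.

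\emph{The missing idea is the next step.} For $x,y\in A$, zero out in $x$ the set $E=\{j:x_j=y_j\ne0\}$. The new word $x''$ lies in $A$ and $\dist(x'',y)=|\supp(x)\cup\supp(y)|$, so $|S\cup T|\le d$ for all $S,T\in\mathcal S=\{\supp(x):x\in A\}$. This gives $|A|\le\sum_{S\in\mathcal S}(q-1)^{|S|}$ with $\mathcal S$ a down-set, which is all Step~3 needs. Moreover, $\{z:\supp(z)\in\mathcal S\}$ is then a $d$-anticode containing $A$. So it is maximality, not further compressions, that makes $A$ support-closed.

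In Step~3 the cap at $\lfloor d/2\rfloor$ is not cosmetic. You need the finite-$n$ weighted theorem (e.g.\ Bey--Engel, via AK methods): the maximum of $\mu_p(\mathcal F)$ over $t$-intersecting $\mathcal F\subseteq 2^{[n]}$ equals $\max_{0\le i\le (n-t)/2}\mu_p(\mathcal F_i)$. The familiar limiting form only bounds $\mu_p(\mathcal F)$ by $\max_{i\ge0}\mu_p(\mathcal F_i)$. That bound is strictly larger whenever $\lfloor d/2\rfloor<\lfloor (n-d-2)/(q-2)\rfloor$, for instance for $q=3$ and $d$ below about $2n/3$, which is exactly where the answer is a Hamming ball. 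For $p=1/q$, this finite-$n$ statement is equivalent to the diametric theorem, by the reduction above and its converse (lift a $t$-intersecting family of zero sets to words). So your route relocates the deep input rather than removing it.

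The boundary discrepancy you flagged is harmless. At $p=1/q$ one has $\mu_p(\mathcal F_{i+1})\ge\mu_p(\mathcal F_i)$ if and only if $(q-2)(i+1)\le t-1$, with equality exactly when $(q-2)(i+1)=t-1$. Hence $\lfloor (t-1)/(q-2)\rfloor$ and $\lfloor (t-2)/(q-2)\rfloor$ give the same maximum.
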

Ahlswede and Khachatrian state the theorem in terms of the intersection
parameter $t$, where $d=n-t$, and use $q-1$ as the distinguished
alphabet symbol. 
After relabelling this symbol as $0$ and substituting
$t=n-d$, their extremal family $K_i$ becomes exactly the family
$K_i(n,d)$ defined above.

We now distinguish maximum anticodes from anticodes that are maximal only under inclusion.

\begin{definition}
\leavevmode\par
\begin{enumerate}
\item
We call a $d$-anticode $A\subseteq\F_q^n$ \emph{maximum} if
$|A|=m_q(n,d)$.

\item
A $d$-anticode $A\subseteq\F_q^n$ is called
\emph{inclusion-maximal} if no point of $\F_q^n\setminus A$ can be
added to $A$ while keeping diameter at most $d$.
We denote the family of inclusion-maximal $d$-anticodes by
$\mM_q(n,d)$.
\end{enumerate}
\end{definition}
Every maximum anticode is inclusion-maximal, but the converse fails, as the following example shows.
\begin{example}
Consider $\F_2^4$ and the anticode
\[
  A=\{0000,0001,0010,0011\}.
\]
Then $\diam(A)=2$.
Moreover, every vector outside $A$ has a $1$ in one of the first two
coordinates and is therefore at Hamming distance at least $3$ from
some element of $A$.
Consequently, no further vector can be added while preserving
diameter~$2$, so $A$ is inclusion-maximal.

On the other hand, $A$ is not maximum, since
\[
  |A|=4<m_2(4,2)=5.
\]
Indeed, a Hamming ball of radius~$1$ is a $2$-anticode of size~$5$.
\end{example}

For the linear setting, we record a simple upper bound on the
dimension of a linear anticode.

\begin{lemma}\label{lem:linear-dimension-bound}
If $C\le\F_q^n$ is a linear $d$-anticode, then $\dim C\le d$, and equality can be obtained by anticodes of the form $\F_q^d\times\{0\}^{n-d}$.
\end{lemma}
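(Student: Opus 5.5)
The plan is to prove $\dim C\le d$ by a puncturing (projection) argument, and then to check the equality case directly.

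Set $k=\dim C$. I would first use linearity to reduce the diameter condition to a weight condition: for $x,y\in C$ we have $\dist(x,y)=\wt(x-y)$ and $x-y\in C$. Hence $\diam(C)\le d$ holds exactly when every codeword has weight at most $d$.

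Next, I would choose an information set. Take a generator matrix $G\in\F_q^{k\times n}$ of $C$. Since $G$ has rank $k$, there is a set $I\subseteq[n]$ of $k$ coordinates on which the columns of $G$ are linearly independent. The projection $\pi_I\colon C\to\F_q^{I}$ is therefore a linear isomorphism.

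For the main step, suppose for contradiction that $k>d$, and consider the vector $(1,1,\dots,1)\in\F_q^{I}$. Its preimage $x=\pi_I^{-1}(1,\dots,1)\in C$ has all $k$ coordinates in $I$ nonzero. This gives $\wt(x)\ge k>d$, contradicting the weight bound. Hence $k\le d$. This is the only point requiring an idea; it amounts to observing that $C$ contains a word of weight at least $\dim C$. As an alternative route, the same conclusion follows from the averaging identity $\sum_{x\in C}\wt(x)=|C|\cdot|\supp C|\,(q-1)/q$ combined with $|\supp C|\ge k$. However, the isomorphism argument is cleaner and avoids the problem that averaging only bounds typical weights and does not exhibit a single heavy word.

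For equality, I would note that every vector of $\F_q^d\times\{0\}^{n-d}$ is supported on the first $d$ coordinates, so it has weight at most $d$. Moreover, this set is a subspace of dimension $d$. It is therefore a linear $d$-anticode of dimension $d$, which attains the bound and completes the proof.
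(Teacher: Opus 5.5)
Your main argument is correct and is essentially the paper's proof: the pivot columns of a reduced row-echelon generator matrix form your information set $I$, and the sum of its rows is exactly $\pi_I^{-1}(1,\dots,1)$, which has weight at least $k$. Your aside about averaging is wrong, however: the identity together with $|\supp C|\ge k$ only produces a word of weight at least $k(q-1)/q$, which gives $k\le qd/(q-1)$ (the bound of \Cref{lem:full-support-upper}) and not $k\le d$, so that alternative route does not reach the same conclusion.
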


\begin{proof}
Let $k:=\dim C$ and take a generator matrix of $C$ in reduced
row-echelon form. The sum of its rows has weight at least $k$, since
it is nonzero in every pivot coordinate. Hence $k\le d$.
\end{proof}

The structural input for counting inclusion-maximal general anticodes is the following Helly-type theorem.

\begin{theorem}[Alon--Jin--Sudakov, Theorem~1.2 \cite{AlonJinSudakov2024}] \label{thm:AJS}
Let $n>t\ge0$, and let $X$ be a set with $|X|\ge2$.
If $B_1,\ldots,B_m$ are Hamming balls of radius $t$ in $X^n$, then there is a set $I\subseteq[m]$ with $|I|\le2^{t+1}$ such that
\[
  \bigcap_{i=1}^m B_i=\bigcap_{i\in I}B_i.
\]
\end{theorem}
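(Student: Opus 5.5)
We will prove the statement by passing to a minimal subfamily and bounding the size of any minimal ``irredundant'' family of balls. Choose $I\subseteq[m]$ inclusion-minimal with $\bigcap_{i\in I}B_i=\bigcap_{i=1}^m B_i$. Such an $I$ exists because $[m]$ itself qualifies. Write $B_i=B(x_i,t)$. If the common intersection is nonempty, minimality gives, for each $i\in I$, a \emph{witness} $y_i\in X^n$ with
\[
\dist(y_i,x_i)\ge t+1,\qquad \dist(y_i,x_j)\le t\ \text{ for all } j\in I\setminus\{i\}.
\]
The empty-intersection case needs a separate argument. There we would instead take $I$ minimal with empty intersection, so that dropping any index makes the intersection nonempty. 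This again yields witnesses of the same kind, and the same bound will apply. It therefore suffices to show that any system $\{(x_i,y_i)\}_{i\in I}$ with these properties has $|I|\le 2^{t+1}$.

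The first reduction is to a set-pair structure. For each $i$, fix a set $D_i\subseteq[n]$ of exactly $t+1$ coordinates on which $x_i$ and $y_i$ differ. Let $j\ne i$. If $x_j$ agreed with $x_i$ on all of $D_i$, then $x_j$ would differ from $y_i$ on all $t+1$ coordinates of $D_i$, contradicting $\dist(y_i,x_j)\le t$. Hence $x_j|_{D_i}\ne x_i|_{D_i}$ for every $j\ne i$. This immediately makes the functions $P_i(z)=\prod_{c\in D_i}\mathbf 1[z_c=x_i(c)]$ satisfy $P_i(x_j)=\delta_{ij}$, so they are linearly independent. However, their ambient span has dimension depending on $n$, so this alone does not give a bound independent of $n$.

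The main obstacle is removing the dependence on $n$ and $|X|$. My first attempt is a random binary compression. For each coordinate $c$, pick independently a uniformly random map $f_c\colon X\to\{0,1\}$, conditioned so that the relevant symbols are separated. This turns the configuration into points of $\{0,1\}^n$ in a way that preserves the witness inequalities: distances can only drop, and I would check that the strict separation $\dist(y_i,x_i)\ge t+1$ survives on $D_i$ with positive probability for the relevant pairs. On $\{0,1\}^n$ I then want pairwise disjoint events $E_i$, each of probability at least $2^{-(t+1)}$. A natural candidate is that the compressed $x_i$ reads a prescribed pattern on $D_i$, combined with a Bollobás-style random ordering of the coordinates that picks out a unique ``first'' index. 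Summing $\Pr(E_i)\le 1$ would give $|I|\le 2^{t+1}$.

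If the probabilistic disjointness proves delicate, the fallback is linear-algebraic. I would work in the space of multilinear polynomials in variables encoding the $t+1$ bits relevant to the witnesses, and use a skew (Frankl--Lovász type) triangularity between $x_i$ and $y_j$ to embed the $P_i$ into a space of dimension $2^{t+1}$. I expect establishing this dimension bound, uniformly in $n$ and $|X|$, to be the crux of the proof.
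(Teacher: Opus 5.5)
\textbf{Verdict: there is a genuine gap.} The paper does not prove this statement; it imports it from Alon--Jin--Sudakov. Judged on its own, your proposal stops exactly where the theorem starts.

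Your first step is correct and standard. An inclusion-minimal $I$ with $\bigcap_{i\in I}B_i=\bigcap_{i=1}^m B_i$ gives, for each $i\in I$, a witness $y_i\in\bigcap_{j\in I\setminus\{i\}}B_j\setminus B_i$, i.e.\ $\dist(y_i,x_i)\ge t+1$ and $\dist(y_i,x_j)\le t$ for $j\ne i$. No case split on whether the intersection is empty is needed. But the whole content of the theorem is the bound $|I|\le 2^{t+1}$ for such witness systems, and, as your last sentence concedes, you do not prove it.

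Independence from $n$ and $|X|$ is not the hard part. Regard $x_i,y_i$ as the $n$-sets $\{(c,x_i(c))\}$ and $\{(c,y_i(c))\}$ in $[n]\times X$. Then F\"uredi's $s$-intersecting version of Bollob\'as's set-pair theorem, with $s=n-t-1$, already gives $|I|\le\binom{2t+2}{t+1}$. What is missing is a mechanism that yields the sharp constant $2^{t+1}$; this is what the new dimension argument of Alon--Jin--Sudakov supplies. The paper genuinely needs the sharp constant: with a Helly bound of order $4^{d}$, the argument in the linear-diameter case of Proposition~\ref{lem:nonlinear-bulk} breaks down, e.g.\ for $q=2$ and $\delta=1/2$, since $h(1/4)<1$.

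The routes you sketch also fail as stated.
\begin{itemize}
\item \emph{The set-pair property.} The condition $x_j|_{D_i}\ne x_i|_{D_i}$ discards the information that $x_j$ and $y_i$ disagree in at most $t$ coordinates \emph{in total}. On its own it permits unboundedly many indices: for $t=0$ and $X=\{0,1\}$, the points $x_i=e_i$ with $D_i=\{i\}$ satisfy it for all $i\le n$. So any argument built only on the $P_i$, or on patterns along the $D_i$, cannot succeed; the hypothesis $\dist(y_i,x_j)\le t$ must enter in full.
\item \emph{Binary compression.} It cannot preserve the system. Take $t=1$, $n=2$, $X=\{a,b,e\}$ and the pairs
$(x_i,y_i)=((a,a),(b,e))$, $((b,b),(e,a))$, $((e,e),(a,b))$.
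This is a valid witness system with all $\dist(x_i,y_i)=2$. Yet every $f_1\colon X\to\{0,1\}$ identifies one of $\{a,b\},\{b,e\},\{e,a\}$ and so kills that witness. Keeping only the survivors, each of which survives with probability at least $2^{-(t+1)}$, does give a valid reduction to the binary case, but it costs a factor $2^{t+1}$.
\item \emph{The Bollob\'as-style events.} The events $E_i$ are never specified. Events of the form ``the compressed $x_i$ reads a prescribed pattern on $D_i$'' concern different coordinate sets for different $i$, so they are not mutually exclusive.
\item \emph{The polynomial fallback.} A space of multilinear polynomials in ``the $t+1$ relevant bits'' presupposes a common set of $t+1$ coordinates. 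In fact the $D_i$ vary with $i$, and their union is a priori bounded only by $(t+1)|I|$. That is precisely the dependence the argument has to eliminate.
\end{itemize}
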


\begin{figure}[htbp]
\centering

\begin{tikzpicture}[
    scale=0.88,
    every node/.style={font=\normalsize},
    pt/.style={circle, inner sep=2pt},
    arrow/.style={-{Latex[length=3.5mm,width=2.7mm]}, line width=1pt}
]

\def\r{2.05}


\begin{scope}[shift={(-4.8,0)}]

    \coordinate (A) at (0,1.35);
    \coordinate (B) at (-1.15,-0.70);
    \coordinate (C) at (1.15,-0.70);

    \coordinate (D) at (-0.55,0.10);
    \coordinate (E) at (0.55,0.10);
    \coordinate (F) at (0,-0.45);

    \draw[draw=purple!70!black, fill=purple!18, fill opacity=0.20, line width=0.9pt]
        (D) circle (\r);
    \draw[draw=teal!70!black, fill=teal!18, fill opacity=0.20, line width=0.9pt]
        (E) circle (\r);
    \draw[draw=red!70!black, fill=red!16, fill opacity=0.20, line width=0.9pt]
        (F) circle (\r);

    \draw[draw=blue!75!black, fill=blue!18, fill opacity=0.28, line width=1pt]
        (A) circle (\r);
    \draw[draw=green!60!black, fill=green!18, fill opacity=0.28, line width=1pt]
        (B) circle (\r);
    \draw[draw=orange!85!black, fill=orange!18, fill opacity=0.28, line width=1pt]
        (C) circle (\r);

    \begin{scope}
        \clip (A) circle (\r);
        \clip (B) circle (\r);
        \clip (C) circle (\r);
        \fill[yellow!70!orange, opacity=0.78] (-3,-3) rectangle (3,3);
    \end{scope}

    \node[pt, fill=blue!75!black]   at (A) {};
    \node[pt, fill=green!60!black]  at (B) {};
    \node[pt, fill=orange!85!black] at (C) {};
    \node[pt, fill=purple!70!black] at (D) {};
    \node[pt, fill=teal!70!black]   at (E) {};
    \node[pt, fill=red!70!black]    at (F) {};

    \node[above=3pt]       at (A) {$x_1$};
    \node[below left=2pt]  at (B) {$x_2$};
    \node[below right=2pt] at (C) {$x_3$};
    \node[left=3pt]        at (D) {$x_4$};
    \node[right=3pt]       at (E) {$x_5$};
    \node[below=3pt]       at (F) {$x_6$};

    \node[font=\large] at (0,0.05) {$A$};

    \node at (0,-4.2)
    {
        $\displaystyle
        A=\bigcap_{i\in[m]} B(x_i,d)
        $
    };

\end{scope}


\draw[arrow] (-0.8,0.12) -- (0.8,0.12);


\begin{scope}[shift={(4.8,0)}]

    \coordinate (A2) at (0,1.35);
    \coordinate (B2) at (-1.15,-0.70);
    \coordinate (C2) at (1.15,-0.70);

    \draw[draw=blue!75!black, fill=blue!18, fill opacity=0.28, line width=1pt]
        (A2) circle (\r);
    \draw[draw=green!60!black, fill=green!18, fill opacity=0.28, line width=1pt]
        (B2) circle (\r);
    \draw[draw=orange!85!black, fill=orange!18, fill opacity=0.28, line width=1pt]
        (C2) circle (\r);

    \begin{scope}
        \clip (A2) circle (\r);
        \clip (B2) circle (\r);
        \clip (C2) circle (\r);
        \fill[yellow!70!orange, opacity=0.78] (-3,-3) rectangle (3,3);
    \end{scope}

    \node[pt, fill=blue!75!black]   at (A2) {};
    \node[pt, fill=green!60!black]  at (B2) {};
    \node[pt, fill=orange!85!black] at (C2) {};

    \node[above=3pt]       at (A2) {$x_1$};
    \node[below left=2pt]  at (B2) {$x_2$};
    \node[below right=2pt] at (C2) {$x_3$};

    \node[font=\large] at (0,0.05) {$A$};

    \node at (0,-4.2)
    {
        $\displaystyle
        A=\bigcap_{i\in I} B(x_i,d)
        $
    };

\end{scope}

\end{tikzpicture}

\caption{
A schematic illustration of the Helly-type theorem for Hamming balls. }
\label{fig:helly-reduction-colored}

\end{figure}
We will also use the following bound for independent sets in regular graphs.

\begin{theorem}[Zhao, Corollary~1.3 \cite{Zhao2010}]\label{thm:zhao}
Let $G$ be an $N$-vertex, $D$-regular graph with $D\ge1$.
If $i(G)$ denotes the number of independent sets of $G$, then
\[
  i(G)
  \le
  \left(2^{D+1}-1\right)^{N/(2D)}.
\]
\end{theorem}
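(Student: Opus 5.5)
This is a cited result (Zhao, Corollary~1.3), and I would prove it in two steps: first reduce to the bipartite case using the bipartite double cover, then invoke Kahn's entropy bound for regular bipartite graphs. Let $H:=G\times K_2$ be the bipartite double cover. Its vertex set is $V(G)\times\{0,1\}$, and $(u,0)\sim(v,1)$ exactly when $uv\in E(G)$. Then $H$ is a $D$-regular bipartite graph on $2N$ vertices. Kahn's theorem states that every $D$-regular bipartite graph $H'$ on $N'$ vertices satisfies $i(H')\le (2^{D+1}-1)^{N'/(2D)}$. Applied to $H$, it gives
\[
  i(H)\le (2^{D+1}-1)^{2N/(2D)}.
\]
It therefore suffices to show $i(G)^2\le i(H)$ and take square roots.

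For Kahn's bound itself, I would use the standard entropy argument. Let $I$ be a uniformly random independent set of $H'$ with bipartition $(L,R)$, so $\log_2 i(H')=H(I)$. By the chain rule, $H(I)=H(I\cap L)+H(I\cap R\mid I\cap L)$. I would bound $H(I\cap L)$ with Shearer's lemma, using the cover of $L$ by the neighbourhoods $N(v)$ for $v\in R$; each vertex of $L$ is covered exactly $D$ times. I would bound the conditional term by $\sum_{v\in R}H(\mathbf 1_{v\in I}\mid I\cap N(v))$. Grouping by $v\in R$, each local term is $\frac1D H(I\cap N(v))+H(\mathbf 1_{v\in I}\mid I\cap N(v))$. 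This is maximised as $\frac1D\log_2(2^D+1)$: the local configuration is either ``$N(v)\cap I$ empty, with $v$ free'' or ``$N(v)\cap I$ nonempty, with $v$ forced out'', and optimising over the probability $p$ of the empty event gives the value $\frac1D\log_2(2^D+1)$. Summing over the $N'/2$ vertices of $R$ gives $\log_2 i(H')\le \frac{N'}{2D}\log_2(2^{D+1}-1)$, after checking the small identity $2^D+1$ vs.\ $2^{D+1}-1$ on the $L$-side accounting. In fact the exact form Kahn obtains is $(2^{D+1}-1)^{N'/(2D)}$, and I would follow his bookkeeping.

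The main obstacle is the inequality $i(G)^2\le i(H)$, which needs Zhao's injection. Given a pair $(A,B)$ of independent sets of $G$, the natural candidate $S=(A\times\{0\})\cup(B\times\{1\})$ can fail to be independent in $H$. This happens exactly on the edges $uv$ of $G$ with $u\in A$ and $v\in B$. Since $A$ and $B$ are each independent, every such edge joins $A\setminus B$ to $B\setminus A$. Hence the ``bad'' graph $G[(A\setminus B)\cup(B\setminus A)]$ is bipartite with these two parts.

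I would fix, once and for all, a canonical rule depending only on $G$: a total order on subsets of $V(G)$. Let $W$ be the set of bad vertices, and choose the first $W'\subseteq W$ with the property that swapping layers on $W'$ yields an independent set of $H$. Swapping layers means putting $A\cap W'$ in layer $1$ and $B\cap W'$ in layer $0$. Swapping on $W'=W$ always works, because after the swap the bad edges join layer-$1$ copies to layer-$1$ copies, which are not edges of $H$. So such a $W'$ exists. I would then verify that the resulting map $(A,B)\mapsto S'$ is injective. To do this, I would recover $W'$ (equivalently $W$) from $S'$ as the set of vertices whose layer assignment creates edges in $G$ between the projections, and then undo the swap. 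Checking that this recovery is well defined, using the bipartiteness of the bad graph, is the delicate step and the one I would spend the most care on.
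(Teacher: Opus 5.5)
The paper does not prove this statement; it quotes it from Zhao. Your outline (pass to $H=G\times K_2$, show $i(G)^2\le i(H)$, apply Kahn) is Zhao's own argument. However, the step you use to get the injection started is false, and the injectivity check is left undone.

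\textbf{The swap step.} Put $A$ in layer $0$ and $B$ in layer $1$. Swapping on $W'$ gives layer $0$ equal to $(A\setminus W')\cup(B\cap W')$ and layer $1$ equal to $(B\setminus W')\cup(A\cap W')$. Since $A$ and $B$ are independent in $G$, an edge of $H$ inside the new set can only come from a bad edge $uv$ ($u\in A$, $v\in B$) in one of two ways. Either $u,v\notin W'$, or $u,v\in W'$, in which case $v$ now sits in layer $0$ and $u$ in layer $1$. So the swap gives an independent set of $H$ \emph{if and only if every bad edge has exactly one endpoint in $W'$}. In particular $W'=W$ fails as soon as there is a bad edge. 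Swapping both endpoints only exchanges their layers, so they do not both land in layer $1$, and the edge is still an edge of $H$. Existence has to come from the bipartiteness you already observed: $W'=W\cap A$ contains exactly one endpoint of every bad edge.

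\textbf{Injectivity.} The exactly-one-endpoint criterion is also what settles the step you called delicate. Your proposed recovery looks in the wrong place: after the swap there are no $G$-edges between the two layers, since that is exactly independence in $H$. Instead, the $G$-edges with both endpoints in a single layer of $S'$ are precisely the bad edges, namely those between $A\setminus W'$ and $B\cap W'$, or between $B\setminus W'$ and $A\cap W'$. Hence the bad graph $F$ can be read off from $S'$. Moreover $F=G[W]$, because every $G$-edge inside $W\subseteq(A\setminus B)\cup(B\setminus A)$ is bad. Validity of a candidate $W'$ depends only on $F$, so the first valid $W'\subseteq W$ is a function of $S'$, and swapping back on it returns $(A,B)$.

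\textbf{Kahn's bound.} Your entropy sketch has the wrong local optimum, and no ``identity'' reconciles $2^D+1$ with $2^{D+1}-1$. With $p=\mathbb P(I\cap N(v)=\varnothing)$, one has $H(I\cap N(v))\le h(p)+(1-p)\log_2(2^D-1)$ and $H(\mathbf 1_{v\in I}\mid I\cap N(v))\le p$. So the local term is at most $\frac1D\bigl[h(p)+(1-p)\log_2(2^D-1)+p\log_2 2^D\bigr]\le\frac1D\log_2(2^{D+1}-1)$. The $v$-bit carries coefficient $1$ rather than $\frac1D$, so the empty-neighbourhood branch has weight $2^D$, not $2$. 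Your value $\frac1D\log_2(2^D+1)$ would give $i(H')\le(2^D+1)^{N'/(2D)}$, which $K_{D,D}$ violates for $D\ge2$. If you simply cite Kahn this is moot, but the sketch as written does not prove his bound.
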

Unless stated otherwise, $q$ is fixed and asymptotics are taken as $n\to\infty$.
We use subscripts on Landau notation whenever more than one parameter may vary.
We use $O_{q,d}(\cdot)$, $\Omega_{q,d}(\cdot)$, and
$\Theta_{q,d}(\cdot)$ when the implicit constants may depend on
$q$ and $d$, but not on $n$.
Similarly, $O_q(\cdot)$, $\Omega_q(\cdot)$, and
$\Theta_q(\cdot)$ allow dependence on $q$ only.

We will use the following binomial estimates in our calculations.
\begin{lemma}\label{lem:binomial-estimates}
For $1\le k\le n$,
\[
  \left(\frac nk\right)^k
  \le \binom nk
  \le \left(\frac{en}{k}\right)^k.
\]
If $k$ is fixed and $n\to\infty$, then
\[
  \binom nk
  =
  \frac{n^k}{k!}+O_k(n^{k-1}).
\]
If $k=o(n)$, then
\[
  \log_2\binom nk
  =
  (1+o(1))k\log_2\frac nk.
\]
If $k/n\to\rho\in(0,1)$, then
\[
  \log_2\binom nk
  =
  (h(\rho)+o(1))n,
\]
where
$
  h(\rho)
  :=
  -\rho\log_2\rho-(1-\rho)\log_2(1-\rho)
$ denotes the binary entropy function evaluated at $\rho$.
\end{lemma}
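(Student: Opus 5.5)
The binomial estimates follow from elementary bounds on factorials together with Stirling's formula. I will establish the four claims in order, each feeding the next.

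\emph{Two-sided bound.} For the lower bound, write
\[
\binom nk=\prod_{j=0}^{k-1}\frac{n-j}{k-j}.
\]
Each factor satisfies $\frac{n-j}{k-j}\ge \frac nk$, since $k(n-j)\ge n(k-j)$ is equivalent to $j(n-k)\ge0$. Taking the product gives $(n/k)^k$. For the upper bound, use
\[
\binom nk\le \frac{n^k}{k!}
\]
together with $k!\ge (k/e)^k$. The latter follows from $e^k=\sum_i k^i/i!\ge k^k/k!$.

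\emph{Fixed $k$.} Expand $n(n-1)\cdots(n-k+1)$ as a polynomial in $n$. Its leading term is $n^k$, and the remaining coefficients depend only on $k$. Dividing by $k!$ gives
\[
\binom nk=\frac{n^k}{k!}+O_k(n^{k-1}).
\]

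\emph{Case $k=o(n)$.} Take $\log_2$ of the two-sided bound:
\[
k\log_2\frac nk\le\log_2\binom nk\le k\log_2\frac nk+k\log_2 e.
\]
Since $n/k\to\infty$, the additive term $k\log_2 e$ is $o\!\left(k\log_2\frac nk\right)$, which gives the claim. One caveat is that the $o(1)$ is uniform in how $k$ varies, because only $n/k\to\infty$ is used. No lower bound on $k$ is needed, since $k\ge1$ is assumed.

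\emph{Case $k/n\to\rho\in(0,1)$.} This is the main step, and it rests on Stirling's formula $\log m!=m\log m-m+O(\log m)$, applied to $n!$, $k!$ and $(n-k)!$. The linear terms $-n+k+(n-k)$ cancel. What remains is
\[
\log\binom nk=n\log n-k\log k-(n-k)\log(n-k)+O(\log n).
\]
Regrouping the main terms gives $-k\log\frac kn-(n-k)\log\frac{n-k}{n}$, which equals $n\,h_e(k/n)$, where $h_e$ is the entropy function in natural logarithms. Both $k\to\infty$ and $n-k\to\infty$, so the $O(\log m)$ error terms are all $O(\log n)=o(n)$.

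Finally, convert to base $2$ and use continuity of $h$ at $\rho$: since $k/n\to\rho$, we have $h(k/n)=h(\rho)+o(1)$. This yields
\[
\log_2\binom nk=(h(\rho)+o(1))n.
\]
The only care needed is this continuity step and confirming that the error terms are $o(n)$. There is no genuine obstacle.
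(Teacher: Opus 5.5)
Your proof is correct in all four parts: the factor-wise comparison $\frac{n-j}{k-j}\ge\frac nk$, the bound $k!\ge(k/e)^k$, the polynomial expansion for fixed $k$, and the Stirling computation giving $\log_2\binom nk=n\,h(k/n)+O(\log n)$ followed by continuity of $h$ are all sound. The paper does not prove this lemma; it cites these as standard estimates from \cite{Das2016Binomial}, and your argument is the standard derivation of them.
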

These standard estimates are collected, for example, in
\cite{Das2016Binomial}.
Since $q$ is fixed, changing the base of the logarithms gives the corresponding estimates with $\log_q$.
We use the convention $h(0)=h(1)=0$.

The preceding estimates give the following bound on a Hamming ball. 

\begin{lemma}\label{lem:hamming-ball-estimate}
For $x\in\F_q^n$ and $1\le r\le n$,
\[
  |B(x,r)|
  \le
  (r+1)\left(\frac{e(q-1)n}{r}\right)^r.
\]
\end{lemma}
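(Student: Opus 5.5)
The plan is to write the ball size as a sum of its $r+1$ shells, bound every shell by the largest binomial term allowed by \Cref{lem:binomial-estimates}, and multiply by the number of shells. By the size formula recorded above,
\[
  |B(x,r)|=\sum_{i=0}^{r}\binom ni(q-1)^i,
\]
a sum of $r+1$ terms. It therefore suffices to show that every term with $0\le i\le r$ is at most $\left(\frac{e(q-1)n}{r}\right)^r$.

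For $i=0$ the term equals $1$. Since $r\le n$, we have $e(q-1)n/r\ge e>1$, so the bound holds. For $1\le i\le r$, the upper estimate in \Cref{lem:binomial-estimates} gives
\[
  \binom ni(q-1)^i\le\left(\frac{e(q-1)n}{i}\right)^i .
\]
The remaining step is monotonicity. Set $c:=e(q-1)n$ and $g(t):=t\log(c/t)$. Then $g'(t)=\log(c/t)-1$, which is nonnegative whenever $t\le c/e=(q-1)n$. Because $i\le r\le n\le (q-1)n$, the function $g$ is nondecreasing on $[1,r]$. Hence $(c/i)^i\le (c/r)^r$ for all $1\le i\le r$.

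Summing the $r+1$ terms, each bounded by $(c/r)^r$, gives the claimed inequality. The only step needing care is this monotonicity check. It reduces to $r\le (q-1)n$, which holds because $q\ge2$. Everything else is a direct application of \Cref{lem:binomial-estimates}.
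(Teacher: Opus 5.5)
Your proof is correct and matches the paper's argument. The paper likewise bounds the $r+1$ shells by the largest term, applies $\binom nj\le (en/j)^j$, and uses that $t\mapsto(e(q-1)n/t)^t$ is non-decreasing on $[1,n]$, with the $j=0$ case trivial; your derivative check $g'(t)=\log(c/t)-1\ge 0$ for $t\le (q-1)n$ simply supplies the justification for the monotonicity step that the paper asserts without proof.
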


\begin{proof}
Choose $0\le j\le r$ for which
$\binom nj(q-1)^j$ is largest. Then
\[
  |B(x,r)|
  \le
  (r+1)\binom nj(q-1)^j.
\]
If $j\ge1$, then \Cref{lem:binomial-estimates} gives
\[
  \binom nj(q-1)^j
  \le
  \left(\frac{e(q-1)n}{j}\right)^j
  \le
  \left(\frac{e(q-1)n}{r}\right)^r,
\]
where the last inequality follows because
$t\mapsto (e(q-1)n/t)^t$ is non-decreasing on $1\le t\le n$.
The case $j=0$ is immediate. Hence
\[
  |B(x,r)|
  \le
  (r+1)\left(\frac{e(q-1)n}{r}\right)^r.
\]
\end{proof}

For integers $0\le r\le n$, the Gaussian binomial coefficient $\bqbinom{n}{r}$ counts the $r$-dimensional subspaces of $\F_q^n$
and is given by 
\begin{equation}\label{eq:gaussian-binomial}
  \bqbinom{n}{r}
  =
  \prod_{i=0}^{r-1}
  \frac{q^{n-i}-1}{q^{r-i}-1}.
\end{equation}
We will use the following estimates (also see \cite{Ihringer2015}) in our calculations.

\begin{lemma}\label{lem:gaussian-uniform}
For every prime power $q$, let $C_q:=\prod_{j=1}^{\infty}(1-q^{-j})^{-1}$.
Then $C_q<\infty$, and for all integers $0\le r\le n$,
\[
    q^{r(n-r)}
    \le
    \bqbinom nr
    \le
    C_q\,q^{r(n-r)}.
\]
Moreover, if $q=q(n)\to\infty$, then uniformly for $0\le r\le n$,
\[
    \bqbinom nr=(1+o(1))q^{r(n-r)}.
\]
\end{lemma}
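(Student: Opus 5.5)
The plan is to work directly from the product formula \eqref{eq:gaussian-binomial}, factor by factor. For $r=0$ both sides equal $1$, so assume $1\le r\le n$. I will write each factor as
\[
\frac{q^{n-i}-1}{q^{r-i}-1}=q^{n-r}\cdot\frac{1-q^{-(n-i)}}{1-q^{-(r-i)}}.
\]
Taking the product over $0\le i\le r-1$ then gives
\[
\bqbinom nr=q^{r(n-r)}\prod_{i=0}^{r-1}\frac{1-q^{-(n-i)}}{1-q^{-(r-i)}}.
\]
So everything reduces to bounding the correction product $P:=\prod_{i=0}^{r-1}\frac{1-q^{-(n-i)}}{1-q^{-(r-i)}}$ from above and below.

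For the lower bound $P\ge1$, I will compare factors termwise. Since $n-i\ge r-i\ge1$, we have $q^{-(n-i)}\le q^{-(r-i)}$, so each numerator is at least its denominator and each factor is at least $1$. Hence $\bqbinom nr\ge q^{r(n-r)}$.

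For the upper bound, I will drop the numerators, which all lie in $(0,1]$, and reindex with $j=r-i$. This gives
\[
P\le\prod_{j=1}^{r}(1-q^{-j})^{-1}\le\prod_{j=1}^{\infty}(1-q^{-j})^{-1}=C_q,
\]
where the second step uses that every factor exceeds $1$. Finiteness of $C_q$ follows from $\sum_j q^{-j}<\infty$: we have $-\log(1-x)\le 2x$ for $0\le x\le 1/2$ and $q^{-j}\le1/2$, so $\log C_q\le 2\sum_j q^{-j}=2/(q-1)$.

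For the moreover part, the bound $\log C_q\le 2/(q-1)$ above is uniform in $r$ and $n$. When $q=q(n)\to\infty$ it gives $1\le P\le C_q\le e^{2/(q-1)}=1+o(1)$ uniformly over $0\le r\le n$, and hence $\bqbinom nr=(1+o(1))q^{r(n-r)}$.

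None of these steps is a real obstacle. The only point needing care is that this last estimate is quantitative and independent of $r$; that is exactly what makes the asymptotic uniform in $r$.
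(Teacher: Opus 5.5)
Your proposal is correct and follows essentially the same route as the paper. Both factor $q^{r(n-r)}$ out of the product formula and bound each correction factor below by $1$ and above by $(1-q^{-j})^{-1}$. Both then control $C_q$ via $-\log(1-x)\le 2x$, which gives $\log C_q\le 2/(q-1)$. Your remark that this bound does not depend on $r$ or $n$ is exactly what makes the moreover part uniform in $r$; the paper leaves that point implicit.
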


\begin{proof}
From \Cref{eq:gaussian-binomial} we obtain.
\begin{equation}\label{eq:gaussian-binomial-2}
       \bqbinom nr
    =
    q^{r(n-r)}
    \prod_{j=1}^{r}
    \frac{1-q^{-(n-r+j)}}{1-q^{-j}}. 
\end{equation}
Since $n-r+j\ge j$, every factor in the product is at least $1$, while $\frac{1-q^{-(n-r+j)}}{1-q^{-j}}
    \le (1-q^{-j})^{-1}$.
Hence
\[
    1
    \le
    \frac{\bqbinom nr}{q^{r(n-r)}}
    \le
    \prod_{j=1}^{r}(1-q^{-j})^{-1}
    \le C_q.
\]
The product defining $C_q$ converges because, for $0\le x\le1/2$, $-\log(1-x)\le 2x$.
Hence
\[
    0\le \log C_q
    =
    \sum_{j=1}^{\infty}-\log(1-q^{-j})
    \le
    2\sum_{j=1}^{\infty}q^{-j}
    =
    \frac{2}{q-1}.
\]
In particular, $C_q=1+o(1)$ as $q\to\infty$. 
\end{proof}

\begin{lemma}\label{lem:gaussian-estimates}
Fix a prime power $q$.

\begin{enumerate}

  \item[(i)] As $t\to\infty$,
  \[
    \sum_{r=0}^{t}\bqbinom tr
    =
    q^{t^2/4+O_q(\log t)}.
  \]

  \item[(ii)] If $r_t=\lfloor t/2\rfloor$, then, as $t\to\infty$,
  \[
    \bqbinom t{r_t}
    -
    t\bqbinom {t-1}{r_t}
    =
    (1-o(1))\bqbinom t{r_t}
    =
    \Theta_q\!\left(q^{t^2/4}\right).
  \]
\end{enumerate}
\end{lemma}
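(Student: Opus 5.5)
Both parts will be read off the factorised form \eqref{eq:gaussian-binomial-2}, $\bqbinom tr = q^{r(t-r)}P_{t,r}$ with $1\le P_{t,r}\le C_q$ by \Cref{lem:gaussian-uniform}. The only inputs are the behaviour of the exponent $r(t-r)$ near $r=t/2$ and the fact that $C_q$ depends on $q$ alone.

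For (i), the lower bound comes from the single term $r=\lfloor t/2\rfloor$. Here $r(t-r)=\lfloor t^2/4\rfloor\ge t^2/4-1/4$, so $\sum_r\bqbinom tr\ge q^{t^2/4-1/4}$. For the upper bound, each of the $t+1$ terms is at most $C_q\,q^{r(t-r)}\le C_q\,q^{t^2/4}$, since $r(t-r)\le t^2/4$. Therefore
\[
\sum_{r=0}^t\bqbinom tr\le (t+1)C_q\,q^{t^2/4}=q^{t^2/4+\log_q(t+1)+\log_q C_q}.
\]
Together the two bounds give $q^{t^2/4+O_q(\log t)}$. The $O(\log t)$ term comes only from the number of summands; a geometric-decay argument would even reduce it to $O_q(1)$, but that is not needed.

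For (ii), the second equality again follows from \Cref{lem:gaussian-uniform}, because $q^{\lfloor t^2/4\rfloor}\le\bqbinom t{r_t}\le C_q\,q^{t^2/4}$ and $\lfloor t^2/4\rfloor\ge t^2/4-1/4$. For the first equality it suffices to show
\[
t\bqbinom{t-1}{r_t}\Big/\bqbinom t{r_t}\to 0.
\]
The standard identity $\bqbinom{t-1}{r}=\bqbinom tr\,\frac{q^{t-r}-1}{q^t-1}$ follows directly from \eqref{eq:gaussian-binomial}. It gives
\[
\frac{\bqbinom{t-1}{r_t}}{\bqbinom t{r_t}}=\frac{q^{t-r_t}-1}{q^t-1}\le q^{-r_t}.
\]
Hence the ratio is at most $t\,q^{-\lfloor t/2\rfloor}\to0$, because $q\ge2$ is fixed. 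This yields $\bqbinom t{r_t}-t\bqbinom{t-1}{r_t}=(1-o(1))\bqbinom t{r_t}$.

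No step is a real obstacle. The only place needing care is the identity used in (ii): the ratio must decay like $q^{-t/2}$, which beats the linear factor $t$. If one prefers to avoid the identity, the cruder bounds from \Cref{lem:gaussian-uniform} also work, giving $\bqbinom{t-1}{r_t}\le C_q\,q^{r_t(t-1-r_t)}$ against $\bqbinom t{r_t}\ge q^{r_t(t-r_t)}$. This again yields the ratio bound $C_q\,q^{-r_t}$, since the exponents differ by exactly $r_t$.
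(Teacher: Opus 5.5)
Your proposal is correct and follows essentially the same route as the paper. Both parts rest on the sandwich $q^{r(t-r)}\le\bqbinom tr\le C_q\,q^{r(t-r)}$ from \Cref{lem:gaussian-uniform}, and part (ii) uses the same telescoped ratio $(q^{t-r_t}-1)/(q^t-1)$; you bound it slightly more sharply by $q^{-r_t}$, where the paper uses $q^{-r_t}/(1-q^{-t})$.
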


\begin{proof}
For part~(i), \Cref{lem:gaussian-uniform} gives
\[
  q^{\lfloor t^2/4\rfloor}
  \le
  \sum_{r=0}^{t}\bqbinom tr
  \le
  (t+1)C_q q^{t^2/4}.
\]
For part~(ii), \Cref{eq:gaussian-binomial-2} gives
\[
  \frac{\bqbinom {t-1}{r_t}}{\bqbinom t{r_t}}
  =
  \frac{q^{t-r_t}-1}{q^t-1}.
\]
Hence
\[
  \frac{t\bqbinom {t-1}{r_t}}{\bqbinom t{r_t}}
  \le
  \frac{tq^{-r_t}}{1-q^{-t}}
  =
  o(1).
\]
Therefore
\[
  \bqbinom t{r_t}
  -
  t\bqbinom {t-1}{r_t}
  =
  (1-o(1))\bqbinom t{r_t}.
\]
Finally, \Cref{lem:gaussian-uniform} and
$r_t(t-r_t)=\lfloor t^2/4\rfloor$ imply $\bqbinom t{r_t}   =   \Theta_q\!\left(q^{t^2/4}\right)$.

\end{proof}

The random-generator construction in the final part of the paper will also require a binomial upper-tail estimate.
For integers $s\ge1$ and $0\le p\le1$, we write
\[
  X\sim\operatorname{Bin}(s,p)
\]
when $X$ is the sum of $s$ independent Bernoulli random variables
with success probability $p$.
For $a,p\in(0,1)$, define the binary relative entropy
\[
  D_2(a\|p)
  :=
  a\log_2\frac ap
  +(1-a)\log_2\frac{1-a}{1-p}.
\]
\begin{lemma}\label{lem:binomial-tail}
Let $X\sim\operatorname{Bin}(s,p)$ and let $0<p<a<1$.
Then
\[
  \mathbb P(X\ge as)
  \le
  2^{-sD_2(a\|p)}.
\]
\end{lemma}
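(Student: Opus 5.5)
The plan is the exponential-moment (Chernoff) method. Fix $\lambda>0$. Since $t\mapsto 2^{\lambda t}$ is increasing, Markov's inequality applied to $2^{\lambda X}$ gives
\[
  \mathbb P(X\ge as)\le 2^{-\lambda as}\,\mathbb E\bigl[2^{\lambda X}\bigr].
\]
Writing $X=\sum_{i=1}^s X_i$ with independent Bernoulli$(p)$ summands, independence factorizes the moment generating function:
\[
  \mathbb E\bigl[2^{\lambda X}\bigr]=\prod_{i=1}^s\mathbb E\bigl[2^{\lambda X_i}\bigr]=\bigl(1-p+p2^{\lambda}\bigr)^s.
\]
Hence
\[
  \mathbb P(X\ge as)\le\Bigl(2^{-\lambda a}\bigl(1-p+p2^{\lambda}\bigr)\Bigr)^s,
\]
and it remains to choose $\lambda$ so that the base equals $2^{-D_2(a\|p)}$.

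Rather than calculus, take directly
\[
  2^{\lambda}=\frac{a(1-p)}{p(1-a)}.
\]
This is $>1$ exactly because $a>p$, so $\lambda>0$ is legitimate; this is the only place the hypothesis $0<p<a<1$ is used, and it also keeps all logarithms finite. Then
\[
  1-p+p2^{\lambda}=(1-p)\Bigl(1+\frac{a}{1-a}\Bigr)=\frac{1-p}{1-a}.
\]
Also
\[
  2^{-\lambda a}=\Bigl(\frac{p}{a}\Bigr)^a\Bigl(\frac{1-a}{1-p}\Bigr)^a.
\]
Multiplying, the base becomes
\[
  \Bigl(\frac{p}{a}\Bigr)^a\Bigl(\frac{1-p}{1-a}\Bigr)^{1-a}=2^{-D_2(a\|p)}.
\]
Raising to the $s$-th power gives the claim.

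The main (minor) obstacle is bookkeeping: the parametrisation $2^{\lambda}$ is chosen so the entropy expression appears in base $2$, matching the definition of $D_2$. One also needs to check that Markov applies to the event $\{X\ge as\}$ even when $as$ is not an integer, which is immediate since $2^{\lambda X}\ge 2^{\lambda as}$ on that event.
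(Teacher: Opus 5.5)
Your proof is correct and takes the same approach as the paper, which simply writes $X$ as a sum of independent Bernoulli$(p)$ variables and invokes the Chernoff bound. You carry that Chernoff computation out explicitly with the optimal choice $2^{\lambda}=a(1-p)/(p(1-a))$, and the algebra reducing the base to $2^{-D_2(a\|p)}$ is correct.
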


\begin{proof}
Write $X=X_1+\cdots+X_s$, where $X_1,\ldots,X_s$ are independent
$\operatorname{Bernoulli}(p)$ random variables and apply the Chernoff
bound. 
\end{proof}

\section{The general count}\label{sec:nonlinear}

We will now derive an asymptotic count for the number of $d$-anticodes in $\F_q^n$. Recall that $\mA_q(n,d)$ denotes the family of all $d$-anticodes in $\F_q^n$. 
Since $\F_q^n$ is finite for fixed $n$, every $d$-anticode is contained in an inclusion-maximal one.
Hence
\begin{equation}\label{eq:basic-reduction}
  2^{m_q(n,d)} \le |\mA_q(n,d)|
  \le \sum_{S\in\mM_q(n,d)}2^{|S|}
  \le |\mM_q(n,d)|2^{m_q(n,d)}.
\end{equation}
The lower bound $|\mA_q(n,d)|\ge 2^{m_q(n,d)}$ follows from counting all subsets of one maximum $d$-anticode (which is always known to exist). 
The upper bound leaves us with the problem of counting inclusion-maximal anticodes, i.e., determining $|\mM_q(n,d)|$. 
The next lemma gives a description of each such anticode.

\begin{lemma}\label{lem:helly-basis}
Let $0\le d<n$.
If $S\in\mM_q(n,d)$, then there is a set $T\subseteq S$ with $|T|\le2^{d+1}$ such that
\[
  S=\bigcap_{x\in T}B(x,d).
\]
\end{lemma}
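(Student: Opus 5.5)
The plan is to show first that an inclusion-maximal anticode coincides with the intersection of the radius-$d$ balls around all of its points, and then to shrink that intersection with \Cref{thm:AJS}.

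\textbf{Step 1: $S=\bigcap_{x\in S}B(x,d)$.} Since $\diam(S)\le d$, every $y\in S$ satisfies $\dist(x,y)\le d$ for all $x\in S$, so $S\subseteq\bigcap_{x\in S}B(x,d)$. For the reverse inclusion, let $y$ lie in the intersection. Then $\dist(x,y)\le d$ for every $x\in S$, so $S\cup\{y\}$ still has diameter at most $d$. Inclusion-maximality of $S$ then forces $y\in S$. Note that $S\neq\varnothing$, since the empty set is never inclusion-maximal: any singleton can be added to it. The intersection is therefore over a nonempty family.

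\textbf{Step 2: apply the Helly-type theorem.} Enumerate $S=\{x_1,\dots,x_m\}$ and apply \Cref{thm:AJS} with $X=\F_q$ (so $|X|=q\ge2$), $t=d$ (allowed because $0\le d<n$), and the balls $B_i=B(x_i,d)$. This yields $I\subseteq[m]$ with $|I|\le 2^{d+1}$ and $\bigcap_{i\in I}B_i=\bigcap_{i=1}^mB_i=S$. Setting $T=\{x_i:i\in I\}$ gives $T\subseteq S$ with $|T|\le2^{d+1}$, as required.

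There is no real obstacle here. The only points needing care are the maximality argument in Step 1 and checking that the hypotheses of \Cref{thm:AJS} match: the balls have radius exactly $d<n$, and the alphabet has at least two symbols. If the theorem is read as requiring $I$ to be nonempty, this is harmless, because $m\ge1$.
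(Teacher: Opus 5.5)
Your proposal is correct and follows essentially the same route as the paper: first show $S=\bigcap_{x\in S}B(x,d)$ via inclusion-maximality, then apply \Cref{thm:AJS} to the family $\{B(x,d):x\in S\}$ to extract $T$. Your extra remark that $S\neq\varnothing$ and your check of the hypotheses of \Cref{thm:AJS} are harmless refinements that the paper leaves implicit.
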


\begin{proof}
We first prove that
\begin{equation}\label{eq:maximal-as-intersection}
  S=\bigcap_{x\in S}B(x,d).
\end{equation}
The inclusion $S\subseteq\bigcap_{x\in S}B(x,d)$ follows from $\diam(S)\le d$.
Conversely, let $y\in\bigcap_{x\in S}B(x,d)$.
Then $\dist(y,x)\le d$ for every $x\in S$, so $S\cup\{y\}$ still has diameter at most $d$.
By inclusion-maximality, $y\in S$.
This proves \eqref{eq:maximal-as-intersection}.
Now apply \Cref{thm:AJS} to the family $\{B(x,d):x\in S\}$.
We get $T\subseteq S$ with $|T|\le2^{d+1}$, and \eqref{eq:maximal-as-intersection} gives the claim.
\end{proof}

\begin{lemma}\label{lem:maximal-count}
For every $0\le d<n$ and every $x\in\F_q^n$,
\[
  |\mM_q(n,d)|
  \le
  q^n|B(x,d)|^{2^{d+1}-1}.
\]
Consequently,
\[
  \log_2|\mM_q(n,d)|
  \le
  n\log_2q
  +(2^{d+1}-1)\log_2|B(x,d)|.
\]
\end{lemma}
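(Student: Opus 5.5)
We bound $|\mM_q(n,d)|$ by an injective encoding of each inclusion-maximal anticode. By \Cref{lem:helly-basis}, every $S\in\mM_q(n,d)$ equals $\bigcap_{x\in T}B(x,d)$ for some $T\subseteq S$ with $|T|\le 2^{d+1}$. So $S$ is determined by $T$. Moreover, $S$ is nonempty: for $S=\varnothing$ any single point could be added, contradicting inclusion-maximality. Hence we may assume $T\neq\varnothing$.

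The encoding works as follows. Fix one point $x_0\in T$. Since $T\subseteq S$ and $\diam(S)\le d$, every point of $T$ lies in $B(x_0,d)$. We then encode $S$ by the pair consisting of $x_0\in\F_q^n$ and a sequence $(y_1,\dots,y_{2^{d+1}-1})$ of elements of $B(x_0,d)$. This sequence lists the remaining elements of $T$, padded by repeating $x_0$ (or any element of $T$) if $|T|<2^{d+1}$. Padding with repeats does not change the intersection $\bigcap B(\cdot,d)$, so the map from codes to $S=B(x_0,d)\cap\bigcap_i B(y_i,d)$ recovers $S$. The map $S\mapsto$ (some chosen code) is therefore injective.

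The number of possible codes is at most $q^n\cdot|B(x_0,d)|^{2^{d+1}-1}$. Since the size of a ball is independent of its centre, this equals $q^n|B(x,d)|^{2^{d+1}-1}$ for any fixed $x$, giving the first inequality. Taking $\log_2$ gives the second.

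The only point needing care is that the balls must be centred at points of $S$ itself, so that all of $T$ sits inside one ball around $x_0$ rather than ranging over all of $\F_q^n$. \Cref{lem:helly-basis} already provides exactly this, since $T\subseteq S$. No step is a real obstacle.
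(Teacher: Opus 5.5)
Your proposal is correct and follows the paper's argument. Both use \Cref{lem:helly-basis} to encode each inclusion-maximal anticode by an ordered list of $2^{d+1}$ points from $T\subseteq S$, padded by repetition. The first point ranges over $\F_q^n$ and every other point lies in the radius-$d$ ball around it. Your check that $S$, and hence $T$, is nonempty is a small detail the paper leaves implicit.
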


\begin{proof}
By \Cref{lem:helly-basis}, every $S\in\mM_q(n,d)$ is determined by an ordered list of at most $2^{d+1}$ points.
By repeating elements if necessary, we may regard it as an ordered list of exactly $2^{d+1}$ points.
The first point has $q^n$ choices.
Every subsequent point lies in a radius-$d$ ball around the first, and hence has at most $|B(x,d)|$ choices.
\end{proof}

We shall use two elementary lower bounds for $m_q(n,d)$. 
A Hamming ball of radius $\lfloor d/2\rfloor$ has diameter at most $d$.
This gives
\begin{equation}\label{eq:ball-lower}
  m_q(n,d)\ge |B(x,\lfloor d/2\rfloor)|.
\end{equation}

We can now compare the maximal-anticode count with $m_q(n,d)$ in the sublinear and linear-diameter regimes.

\begin{proposition}\label{lem:nonlinear-bulk}
Fix a prime power $q$, and let $n\to\infty$. Then
\[
  |\mA_q(n,d)|
  =
  2^{(1+o(1))m_q(n,d)}
\]
in each of the following regimes:
\begin{enumerate}
  \item $4\le d=o(n)$;
  \item $d/n\to\delta$ for some $0<\delta<1$.
\end{enumerate}
\end{proposition}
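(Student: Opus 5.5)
By \eqref{eq:basic-reduction} the lower bound $|\mA_q(n,d)|\ge 2^{m_q(n,d)}$ is automatic. So it suffices to show that
\[
  \log_2|\mM_q(n,d)|=o\bigl(m_q(n,d)\bigr)
\]
in both regimes. The upper bound on $|\mM_q(n,d)|$ comes from \Cref{lem:maximal-count}, and the lower bound on $m_q(n,d)$ from \eqref{eq:ball-lower}. Write $r=\lfloor d/2\rfloor$. By \Cref{lem:maximal-count} and the trivial bound $|B(x,d)|\le q^n$ we get
\[
  \log_2|\mM_q(n,d)|\le n\log_2 q+(2^{d+1}-1)\log_2|B(x,d)|\le 2^{d+1}n\log_2 q .
\]
From \eqref{eq:ball-lower} we also have $m_q(n,d)\ge\binom{n}{r}(q-1)^r$. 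The whole proof then reduces to comparing $2^{d+1}n$ with $\binom nr(q-1)^r$.

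\emph{Regime 1 ($4\le d=o(n)$).} Here $r\ge2$ and $r=o(n)$. \Cref{lem:binomial-estimates} gives $\binom nr\ge (n/r)^r$, so $m_q(n,d)\ge (n/r)^r$. It suffices to show
\[
  \frac{2^{2r+2}\,n}{(n/r)^r}=4n\left(\frac{4r}{n}\right)^{r}\to0 .
\]
Since $r/n\to0$, once $n$ is large we have $4r/n\le\varepsilon$. The expression is then at most $4n\,\varepsilon^{r-2}(4r/n)^2=64\,\varepsilon^{r-2}r^2/n$. If $r$ stays bounded, this is $O(1/n)\to0$. If $r\to\infty$, write $r^2\varepsilon^{r-2}/n\le r^2\varepsilon^{r-2}\cdot(\text{bounded})$, using $r\le n$; this also tends to $0$, since $\varepsilon^{r}$ decays faster than $r^2$ grows. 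So the bound is uniform in $r$, and I will organise it by splitting into the cases $r\le\sqrt n$ and $r>\sqrt n$ if needed. The point where some care is needed is $d=4$: there $m_q(n,4)\ge\binom n2\approx n^2/2$ while $\log_2|\mM|=O(n)$, and $d\ge4$ is exactly what makes the $n\log_2 q$ term negligible. For $d\le3$ the ball only gives order $n$, which is why those $d$ are excluded.

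\emph{Regime 2 ($d/n\to\delta\in(0,1)$).} Here $r/n\to\delta/2\in(0,1/2)$. By \Cref{lem:binomial-estimates},
\[
  \log_2 m_q(n,d)\ge \bigl(h(\delta/2)+(\delta/2)\log_2(q-1)+o(1)\bigr)n ,
\]
while $\log_2\log_2|\mM_q(n,d)|\le (\delta+o(1))n+O(\log n)$. So it suffices to have $h(\delta/2)+(\delta/2)\log_2(q-1)>\delta$, which is the main obstacle. For $q\ge5$ it holds because $\log_2(q-1)\ge2$. For $q=2,3$ it can fail; for example $h(\delta/2)<\delta$ for $\delta$ close to $1$. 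If the crude bound fails, I would first sharpen the upper bound on $|\mM_q(n,d)|$, but the factor $2^{d+1}$ from \Cref{thm:AJS} cannot be improved. The alternative is to sharpen the lower bound on $m_q$ using the exact values in \Cref{thm:kleitman} and \Cref{thm:ahlswede-khachatrian}. For $q=2$ and $\delta$ near $1$, $m_2$ is close to $2^{n-1}$, so $\log_2 m_2\approx n$, which exceeds $(\delta+o(1))n$ for every $\delta<1$. In general I would use that $m_q(n,d)\ge q^{d-2i}|B_{n-d+2i}(i)|$ for every $i\le r$, via the families $K_i$ for $q\ge3$, or $\ge|B(\cdot,r)|$ for $q=2$ together with $h(\delta/2)\ge\delta/2\cdot\log_2(2/\delta)$ style estimates. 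I would then optimise over $i$ to beat the exponent $\delta$. Carrying out this entropy comparison for small $q$ uniformly in $\delta$ is the step I expect to be hardest.
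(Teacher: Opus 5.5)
\textbf{Gap in Regime 2.} You correctly reduce this regime to the inequality $h(\delta/2)+(\delta/2)\log_2(q-1)>\delta$. You then assert that it can fail for $q=2,3$, citing $h(\delta/2)<\delta$ for $\delta$ close to $1$, and you leave the regime unfinished as the ``hardest step''. That assertion is false, and the missing idea is one line. The function $h$ is strictly concave on $[0,1]$ with $h(0)=0$ and $h(1/2)=1$. Since $\delta/2=(1-\delta)\cdot 0+\delta\cdot\tfrac12$, we get $h(\delta/2)>(1-\delta)h(0)+\delta h(1/2)=\delta$ for every $\delta\in(0,1)$.

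Near $\delta=1$ there is no counterexample either. The deficit $1-h(\delta/2)=\frac{(1-\delta)^2}{2\ln 2}+O\bigl((1-\delta)^4\bigr)$ is quadratic in $1-\delta$, while the margin $1-\delta$ is linear. Since $\log_2(q-1)\ge 0$, your crude comparison of $2^{d+1}n\log_2 q$ with $\binom nr$ already closes Regime 2 for every $q$. This is exactly the paper's argument.

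Your proposed fallback would not have helped. For $q=2$ and fixed $\delta<1$, Kleitman's value $m_2(n,d)=\sum_{j\le r}\binom nj$ equals $2^{(h(\delta/2)+o(1))n}$. It is not ``close to $2^{n-1}$''; it is exponentially smaller. So the exact extremal values have the same exponent as the ball, and you would still need $h(\delta/2)>\delta$.

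\textbf{Regime 1 is correct.} Using the trivial bound $|B(x,d)|\le q^n$ there is slightly simpler than the paper, which invokes the finer ball estimate to get $\log_2|\mM_q(n,d)|=O_q\bigl(n+2^d d\log(n/d)\bigr)$. You can tidy the uniformity discussion. Once $4r/n\le\varepsilon<1$, the quantity $r^2\varepsilon^{r-2}$ is bounded uniformly over all integers $r\ge 2$. Hence $4n(4r/n)^r\le 64\,r^2\varepsilon^{r-2}/n=O(1/n)$ directly, and no split into bounded or unbounded $r$ (or $r\le\sqrt n$ versus $r>\sqrt n$) is needed.
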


\begin{proof}
By \eqref{eq:basic-reduction}, it suffices to prove that
$\log_2|\mM_q(n,d)|=o\!\left(m_q(n,d)\right).$ 
Let $r:=\left\lfloor\frac d2\right\rfloor$.
By \eqref{eq:ball-lower}, we have
\begin{equation}\label{eq:mq-ball-term-lower}
m_q(n,d)
\ge |B(x,r)|
\ge \binom nr(q-1)^r
\ge \binom nr.
\end{equation}
We prove the two diameter regimes separately:

\paragraph{1. Sublinear diameter.}
Suppose that $4\le d=o(n)$. 
By \Cref{lem:maximal-count,lem:hamming-ball-estimate},
\[
\begin{aligned}
  \log_2|\mM_q(n,d)|
  &\le n\log_2 q
  +(2^{d+1}-1)\log_2|B(x,d)|\\
  &\le n\log_2 q
  +(2^{d+1}-1)
  \left(
    \log_2(d+1)
    +d\log_2\frac{e(q-1)n}{d}
  \right),
\end{aligned}
\]
and hence
\[
  \log_2|\mM_q(n,d)|
  =
  O_q\!\left(n+2^d d\log\frac nd\right).
\]
By \eqref{eq:mq-ball-term-lower} and
\Cref{lem:binomial-estimates},
\[
m_q(n,d)
\ge \binom nr
\ge \left(\frac nr\right)^r
\ge \left(\frac{2n}{d}\right)^r.
\]

We compare each term with this lower bound.
Since $d\in\{2r,2r+1\}$ and $r\ge2$, we have
$d\le2^{r+1}$. 
Therefore
\[
  \frac{n}{(2n/d)^r}
  =
  \frac{d}{2^r(n/d)^{r-1}}
  \le
  \frac{2}{(n/d)^{r-1}}
  \le
  \frac{2d}{n}
  \longrightarrow0.
\]
Also, since $d-r\le r+1$,
\[
\frac{2^d d\log(n/d)}{(2n/d)^r} = 2^{d-r}d\left(\log\frac nd\right)
  \left(\frac dn\right)^r \le 2^{r+1}d\left(\log\frac nd\right)
  \left(\frac dn\right)^r \le 4\left(\log\frac nd\right)
  \left(\frac{4d}{n}\right)^r
\]
For all sufficiently large $n$, we have $4d/n<1$, and since $r\ge2$,
\[
  4\log\frac nd
  \left(\frac{4d}{n}\right)^r
  \le
  64\left(\frac dn\right)^2\log\frac nd
  \longrightarrow0.
\]
Hence
\[
  n+2^d d\log\frac nd
  =
  o\!\left(\left(\frac{2n}{d}\right)^r\right)
  =
  o\!\left(m_q(n,d)\right).
\]

\paragraph{2. Linear diameter.}
Suppose that $\frac{d}{n}\longrightarrow\delta$ for some $0<\delta<1$, and let $r:=\lfloor d/2\rfloor$.
Since $r/n\to\delta/2$, \eqref{eq:mq-ball-term-lower} and
\Cref{lem:binomial-estimates} give
\[
m_q(n,d)
\ge
\binom nr
\ge
2^{(h(\delta/2)+o(1))n}.
\]
Strict concavity of $h$, together with
$h(0)=0$ and $h(1/2)=1$, gives $h(\delta/2)>\delta$.

On the other hand, $|B(x,d)|\le q^n$, so
\[
  \log_2|\mM_q(n,d)|
  \le
  2^{d+1}n\log_2q
  =
  2^{(\delta+o(1))n}.
\]
Consequently,
\[
  \frac{\log_2|\mM_q(n,d)|}{m_q(n,d)}
  \le
  2^{-(h(\delta/2)-\delta+o(1))n}
  \longrightarrow0.
\]

\end{proof}

The preceding proposition requires $d\geq 4$. 
We now give an argument for the diameters $d=1,2,3$.

\begin{proposition}\label{lem:nonlinear-small-diameter}
Fix a prime power $q$. For every $d\in\{1,2,3\}$,
\[
    2^{m_q(n,d)}
    \le |\mA_q(n,d)|
    \le
    2^{m_q(n,d)+n\log_2q+O_q(\log n)}.
\]
Moreover,
\[
    |\mA_q(n,1)|
    =
    1+q^n+nq^{n-1}(2^q-q-1),
\]
and in particular,
\[
    |\mA_q(n,d)|=2^{\Theta_q(n)}
    \qquad
    \text{for }d\in\{1,2,3\}.
\]
\end{proposition}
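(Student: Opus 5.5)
The plan is to use the general sandwich \eqref{eq:basic-reduction} together with \Cref{lem:maximal-count}, where $d\le3$ is fixed, and then handle $d=1$ by hand.

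\textbf{General bounds.} The lower bound $2^{m_q(n,d)}\le|\mA_q(n,d)|$ is already contained in \eqref{eq:basic-reduction}. For the upper bound, \Cref{lem:maximal-count} gives
\[
\log_2|\mM_q(n,d)|\le n\log_2 q+(2^{d+1}-1)\log_2|B(x,d)|.
\]
Since $d\le3$ is fixed, we have $|B(x,d)|=O_q(n^3)$, and $2^{d+1}-1\le15$. Hence the second term is $O_q(\log n)$, and inserting this into \eqref{eq:basic-reduction} yields the stated upper bound.

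\textbf{The exact count for $d=1$.} First I would classify the $1$-anticodes. Any two distinct points must differ in exactly one coordinate. Let $A$ have at least three points, say $x,y,z$. Suppose $x,y$ differ in coordinate $i$ and $x,z$ differ in coordinate $j\ne i$. Then $y$ and $z$ differ in both $i$ and $j$, which is a contradiction. So every $1$-anticode with at least two points is a subset of size at least $2$ of a "line" $\{x+\lambda e_i:\lambda\in\F_q\}$.

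Such a line is determined by the direction $i$ (there are $n$ choices) and by the values of $x$ off coordinate $i$ (there are $q^{n-1}$ choices). This gives $nq^{n-1}$ lines in total. Distinct lines meet in at most one point, so every subset of size at least $2$ lies in a unique line. Each line contributes $2^q-q-1$ such subsets. Adding the empty set and the $q^n$ singletons gives
\[
|\mA_q(n,1)|=1+q^n+nq^{n-1}(2^q-q-1).
\]

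\textbf{The $\Theta_q(n)$ conclusion.} It remains to check that $m_q(n,d)=O_q(n)$ for $d\le3$. Together with the upper bound, this gives $\log_2|\mA_q(n,d)|=O_q(n)$. The lower bound $\Omega_q(n)$ follows because $|\mA_q(n,d)|\ge|\mA_q(n,1)|\ge q^n$.

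To bound $m_q$, I would use the diametric theorems.
\begin{itemize}
\item For $q=2$, \Cref{thm:kleitman} gives $m_2(n,d)\le 2\sum_{j\le1}\binom{n}{j}=O(n)$.
\item For $q\ge3$, \Cref{thm:ahlswede-khachatrian} gives $m_q(n,d)=q^{d-2r}\sum_{j\le r}\binom{n-d+2r}{j}(q-1)^j$ with $r\le1$. This is $O_q(n)$, since $q^{d-2r}\le q^3$.
\end{itemize}
One can also bound $m_q(n,d)$ directly without these theorems, but citing them is simplest.

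The main care point is the $d=1$ classification, namely showing that the representation of each anticode of size at least $2$ by a line is unique. The rest is routine.
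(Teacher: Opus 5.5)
Your proposal is correct and follows essentially the same route as the paper: the sandwich \eqref{eq:basic-reduction} combined with \Cref{lem:maximal-count} for the general bounds, a coordinate-line classification for $d=1$, and the diametric theorems to get $m_q(n,d)=O_q(n)$. The only difference is cosmetic. You obtain the $\Omega_q(n)$ lower bound from $|\mA_q(n,d)|\ge|\mA_q(n,1)|\ge q^n$, whereas the paper uses the exact values $m_q(n,2)=1+(q-1)n$ and $m_q(n,3)=q(1+(q-1)(n-1))$; your way works equally well and needs only an upper bound on $m_q$.
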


\begin{proof}
Let $d\in\{1,2,3\}$. Since $d$ and $q$ are fixed,
$
    |B(x,d)|=O_q(n^d).
$
Hence \Cref{lem:maximal-count} gives
\[
    \log_2|\mM_q(n,d)|
    \le n\log_2q+O_q(\log n).
\]
Together with \eqref{eq:basic-reduction}, this gives
\[
    2^{m_q(n,d)}
    \le |\mA_q(n,d)|
    \le
    2^{m_q(n,d)+n\log_2q+O_q(\log n)}.
\]

For $d=1$, every anticode containing at least two points is contained
in a unique coordinate line: if $x\ne y$ have distance at most $1$,
then they differ in exactly one coordinate, and every point at distance
at most $1$ from both must vary only in that coordinate.
There are $nq^{n-1}$ coordinate lines, each containing $q$ points.
Thus, counting the empty set, the $q^n$ singletons, and the subsets of
size at least two in each coordinate line gives
\[
    |\mA_q(n,1)|
    =
    1+q^n+nq^{n-1}(2^q-q-1)
    =
    2^{\Theta_q(n)}.
\]

Finally, \Cref{thm:kleitman,thm:ahlswede-khachatrian} give, for all
sufficiently large $n$,
\[
    m_q(n,2)=1+(q-1)n,
    \qquad
    m_q(n,3)=q\bigl(1+(q-1)(n-1)\bigr).
\]
Hence $m_q(n,d)=\Theta_q(n)$ for $d\in\{2,3\}$, and the preceding
bounds give
$
    |\mA_q(n,d)|=2^{\Theta_q(n)}
$
also for $d\in\{2,3\}$.
\end{proof}

We next consider the diameter regime $d=n-o(n)$, treating $q\ge3$ and $q=2$ separately. 
We call this the \textit{near-full} regime.
We first record a lower bound on $m_q(n,d)$.

\begin{lemma}\label{lem:coordinate-subcube-lower}
For every $0\le d\le n$,
\[
  m_q(n,d)\ge q^d.
\]
\end{lemma}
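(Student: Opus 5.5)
The plan is to exhibit one explicit $d$-anticode of size $q^d$: the coordinate subcube
\[
  A:=\F_q^d\times\{0\}^{n-d}=\{x\in\F_q^n:x_j=0\text{ for all }j>d\}.
\]
This set was already used in \Cref{lem:linear-dimension-bound} as the extremal linear anticode. Its cardinality is clearly $q^d$, since the first $d$ coordinates are free and the remaining $n-d$ are fixed.

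Next I would check the diameter. Any two points $x,y\in A$ agree in every coordinate $j>d$, so they can differ only in the first $d$ coordinates. Hence $\dist(x,y)\le d$, and $A$ is a $d$-anticode. By the definition of $m_q(n,d)$ as a maximum over all $d$-anticodes, this gives $m_q(n,d)\ge|A|=q^d$.

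The edge cases are covered by the same construction. For $d=0$ we get a single point, and for $d=n$ we get the whole space. There is no real obstacle here.

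Alternatively, one could observe that $A$ is exactly $K_0(n,d)$ in the notation preceding \Cref{thm:ahlswede-khachatrian}. The direct argument is self-contained, however, and works for every $q$, including $q=2$.
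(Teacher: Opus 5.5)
Your proof is correct and is the same as the paper's: fix $n-d$ coordinates, let the other $d$ vary freely, and note that the resulting set of size $q^d$ has all pairwise distances at most $d$. In your aside, $K_0(n,d)$ zeroes the \emph{first} $n-d$ coordinates, so it is $\{0\}^{n-d}\times\F_q^d$ and matches your set $\F_q^d\times\{0\}^{n-d}$ only up to a coordinate permutation; also, the paper defines $K_i(n,d)$ only for $q\ge3$. Neither point affects your argument.
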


\begin{proof}
Fix $n-d$ coordinates and let the remaining $d$ coordinates vary freely.
The resulting set has size $q^d$, and any two of its points differ in at
most those $d$ coordinates.
\end{proof}

\begin{proposition}\label{lem:nonlinear-near-full-qary}
Fix a prime power $q\ge3$. 
Let $d=n-t$, where $1\le t=o(n)$. 
Then
\[
  |\mA_q(n,d)|
  =
  2^{(1+o(1))m_q(n,d)}.
\]
\end{proposition}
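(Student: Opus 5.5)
Since $1\le t$, we have $0\le d<n$, so \Cref{lem:maximal-count} applies. By \eqref{eq:basic-reduction}, as in the proof of \Cref{lem:nonlinear-bulk}, it suffices to show that $\log_2|\mM_q(n,d)|=o(m_q(n,d))$. The plan is to bound both sides crudely. The key point is that for $q\ge3$ the trivial subcube lower bound $q^d$ already beats the Helly-type upper bound, which is only of order $2^{d}$ up to polynomial factors. No stability or fine diametric information is needed.

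For the upper bound, we use \Cref{lem:maximal-count} together with the trivial estimate $|B(x,d)|\le q^n$. This gives
\[
  \log_2|\mM_q(n,d)|
  \le n\log_2 q+(2^{d+1}-1)\,n\log_2 q
  = 2^{d+1}n\log_2 q .
\]
For the lower bound, \Cref{lem:coordinate-subcube-lower} gives $m_q(n,d)\ge q^d$.

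Taking the ratio, we get
\[
  \frac{\log_2|\mM_q(n,d)|}{m_q(n,d)}
  \le 2n\log_2 q\left(\frac 2q\right)^{d}.
\]
Since $t=o(n)$, we have $d=n-t\ge n/2$ for all large $n$. Because $q\ge3$, we have $2/q\le 2/3$, so the right-hand side is at most $2n\log_2 q\,(2/3)^{n/2}$, which tends to $0$. Hence $\log_2|\mM_q(n,d)|=o(m_q(n,d))$, and \eqref{eq:basic-reduction} yields $2^{m_q(n,d)}\le|\mA_q(n,d)|\le 2^{(1+o(1))m_q(n,d)}$.

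I do not expect a genuine obstacle. The only point to stress is where $q\ge3$ is used: it guarantees $q^d\gg 2^{d}n$. For $q=2$ the comparison $2^{d+1}n$ versus $2^d$ fails. There one would need the sharper values of $m_2(n,d)$ from \Cref{thm:kleitman} and a better bound on $|B(x,d)|$, which is why that case is treated separately (and why $d=n-1$, $q=2$ behaves differently).
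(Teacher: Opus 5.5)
Your proposal is correct and follows essentially the same route as the paper: it bounds $\log_2|\mM_q(n,d)|$ by $2^{d+1}n\log_2 q$ via \Cref{lem:maximal-count} and the trivial estimate $|B(x,d)|\le q^n$, and uses $m_q(n,d)\ge q^d$ from \Cref{lem:coordinate-subcube-lower}. The resulting ratio is $O_q\bigl(n(2/q)^d\bigr)\to 0$ for $q\ge3$, and \eqref{eq:basic-reduction} then gives the claim. Your explicit use of $d\ge n/2$ for large $n$ just spells out the paper's step ``since $d=n-o(n)$''.
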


\begin{proof}
By \Cref{lem:maximal-count} and the trivial estimate $|B(x,d)|\le q^n$,
\[
  \log_2|\mM_q(n,d)|
  =
  O_q\!\left(n2^d\right).
\]
On the other hand, \Cref{lem:coordinate-subcube-lower} gives
$
  m_q(n,d)\ge q^d.
$
Since $d=n-o(n)$ and $q\ge3$,
\[
  \frac{\log_2|\mM_q(n,d)|}{m_q(n,d)}
  \le
  O_q\!\left(n\left(\frac2q\right)^d\right)
  \longrightarrow0.
\]
The result now follows from \eqref{eq:basic-reduction}.
\end{proof}

For $q=2$ in the near-full regime, we will view the diameter condition as an independent-set condition.
For $1\le t\le n$, let $G_{n,t}$ be the graph on $\F_2^n$ in which
$x$ and $y$ are adjacent when $\dist(x,y)>n-t$.  
Then the $(n-t)$-anticodes are exactly the independent sets of $G_{n,t}$.
\begin{lemma}\label{lem:binary-forbidden-graph}
$G_{n,t}$ is a regular graph of degree
$
  D_{n,t}
  :=
  \sum_{j=0}^{t-1}\binom nj.
$
\end{lemma}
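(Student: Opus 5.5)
The plan is to count, for a fixed vertex $x\in\F_2^n$, the vertices $y$ with $\dist(x,y)>n-t$. Everything rests on the complementation map $y\mapsto y+\mathbf 1$, where $\mathbf 1$ is the all-ones vector. The count should not depend on $x$, which is exactly regularity.

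\textbf{Step 1 (translation invariance).} The adjacency relation depends only on $x+y$, since $\dist(x,y)=\wt(x+y)$. Hence the neighbourhood of $x$ is the translate $x+N_0$, where
\[
  N_0:=\{z\in\F_2^n:\wt(z)>n-t\}
\]
is the neighbourhood of $0$. In particular $|N(x)|=|N_0|$ for every $x$, so $G_{n,t}$ is regular. Loops do not occur: $\wt(0)=0\le n-t$ because $t\le n$, so $0\notin N_0$.

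\textbf{Step 2 (computing the degree).} Group the elements of $N_0$ by weight and substitute $j=n-i$:
\[
  |N_0|=\sum_{i=n-t+1}^{n}\binom ni=\sum_{j=0}^{t-1}\binom n{n-j}=\sum_{j=0}^{t-1}\binom nj .
\]
Equivalently, complementation $z\mapsto z+\mathbf 1$ is a bijection from $N_0$ onto the Hamming ball $B(0,t-1)$. This gives $D_{n,t}=|B(0,t-1)|$, which is the form one will want when feeding it into \Cref{thm:zhao}.

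\textbf{Step 3 (hypothesis of \Cref{thm:zhao}).} Since $t\ge1$, we have $D_{n,t}\ge\binom n0=1$, so the requirement $D\ge1$ is met.

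There is no real obstacle here. The only care needed is with the strict inequality $>n-t$ when reindexing the sum, and with checking that there are no loops; both follow directly from $1\le t\le n$.
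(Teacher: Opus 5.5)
Your proof is correct and follows the paper's argument: the neighbours of $x$ are exactly the vectors at distance $n-t+1,\ldots,n$ from $x$, and reindexing $j=n-i$ gives $\sum_{j=0}^{t-1}\binom nj$, which is independent of $x$. Your extra remarks on translation invariance, the absence of loops, and $D_{n,t}\ge1$ are small refinements of the same computation.
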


\begin{proof}
For fixed $x$, its neighbors are exactly the vectors at distances $n-t+1,\ldots,n$ from $x$.

\[
  \deg(x)
  =
  \sum_{j=n-t+1}^{n}\binom nj
  =
  \sum_{j=0}^{t-1}\binom nj.
\]
This is independent of $x$.
\end{proof}

We will also need the following estimate for the size of a maximum binary anticode in the near-full regime. 

\begin{lemma}\label{lem:binary-near-full-extremal}
Let $t=n-d$ satisfy $1\le t=o(\sqrt n)$. Then
\[
    m_2(n,d)
    =
    \left(\frac12+o(1)\right)2^n.
\]
\end{lemma}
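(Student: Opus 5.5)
We use Kleitman's formula (\Cref{thm:kleitman}) with $d=n-t$, and show that the sum it produces is a vanishing fraction short of half of $2^n$. The natural comparison is the tail of the binomial distribution. Write $d=2r$ or $d=2r+1$.

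\emph{Even $d=2r$.} Here $r=(n-t)/2$ and
\[
m_2(n,d)=\sum_{j\le (n-t)/2}\binom nj .
\]
By symmetry, $\sum_{j<n/2}\binom nj$ equals $2^{n-1}$, minus half of the central term when $n$ is even. The difference from $m_2(n,d)$ consists of at most $t/2+1$ terms $\binom nj$ with $j$ within $t/2$ of $n/2$, including possibly half the middle coefficient. Each such term is at most $\binom{n}{\lfloor n/2\rfloor}=O(2^n/\sqrt n)$. Therefore
\[
\bigl|m_2(n,d)-2^{n-1}\bigr|\le (t/2+2)\,O(2^n/\sqrt n)=O\!\left(\frac{t+1}{\sqrt n}\right)2^n=o(2^n),
\]
using $t=o(\sqrt n)$.

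\emph{Odd $d=2r+1$.} Here $r=(n-t-1)/2$ and
\[
m_2(n,d)=2\sum_{j\le r}\binom{n-1}j .
\]
Apply the same argument in dimension $n-1$. The sum $\sum_{j<(n-1)/2}\binom{n-1}j$ is $2^{n-2}$ up to half a central term. Its difference from $\sum_{j\le r}\binom{n-1}{j}$ involves at most $t/2+2$ terms, each $O(2^{n-1}/\sqrt{n})$. Hence
\[
m_2(n,d)=2\bigl(2^{n-2}+o(2^{n-1})\bigr)=\left(\tfrac12+o(1)\right)2^n .
\]

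The central-binomial bound $\binom{m}{\lfloor m/2\rfloor}\le 2^m\sqrt{2/(\pi m)}$ is standard (Stirling), or it can be cited from \cite{Das2016Binomial}. Note that $t\ge1$ guarantees $d<n$, so \Cref{thm:kleitman} applies.

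The only real point is counting the boundary terms. The number of omitted or extra binomial coefficients near the middle is $O(t)$, and each is $O(2^n/\sqrt n)$. This is exactly where the hypothesis $t=o(\sqrt n)$ enters. The remaining work is parity bookkeeping, which is routine.
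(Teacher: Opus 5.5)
Your proof is correct and follows essentially the same route as the paper. Both use Kleitman's formula, compare with the symmetric half-sum $2^{n-1}$ (or $2^{n-2}$ in dimension $n-1$ for odd $d$), and bound the $O(t)$ near-central boundary terms by $\binom{n}{\lfloor n/2\rfloor}=O(2^n/\sqrt n)$; you also write out the odd-diameter bookkeeping, which the paper dismisses as analogous.
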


\begin{proof}
Suppose first that $d$ is even, say $d=2r$. By
\Cref{thm:kleitman},
\[
    m_2(n,d)=\sum_{i=0}^{r}\binom ni.
\]
Let $M=\lfloor n/2\rfloor$. By symmetry,
\[
    \sum_{i=0}^{M}\binom ni
    =
    2^{n-1}+O\left(\binom nM\right).
\]
Since $d=n-t$, we have $M-r=O(t)$. Hence
\[
    m_2(n,d)
    =
    2^{n-1}
    +
    O\left(t\binom nM\right).
\]
Using the standard estimate (see \cite{Das2016Binomial}) $\binom nM=O\left(\frac{2^n}{\sqrt n}\right)$, 
we obtain
\[
    m_2(n,d)
    =
    2^{n-1}
    +
    O\left(\frac{t}{\sqrt n}2^n\right)
    =
    \left(\frac12+o(1)\right)2^n.
\]
The odd-diameter case follows analogously from the formula in \Cref{thm:kleitman}.
\end{proof}

\begin{proposition}\label{lem:nonlinear-near-full-binary}
Let $d=n-t$, where $2\le t=o(n)$.
Then
\[
  |\mA_2(n,d)|
  =
  2^{(1+o(1))m_2(n,d)}.
\]
\end{proposition}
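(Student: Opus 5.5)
The plan is to split according to the size of $t$, using Zhao's bound (\Cref{thm:zhao}) when $t$ is small and the Helly-type bound (\Cref{lem:maximal-count}) when $t$ is large. The lower bound $|\mA_2(n,d)|\ge 2^{m_2(n,d)}$ always holds by \eqref{eq:basic-reduction}, so only the upper bound needs work. I take the threshold $t\le n^{1/3}$ versus $t>n^{1/3}$; any threshold that is $o(\sqrt n)$ and grows faster than $\log n$ would do.

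\emph{Small codiameter, $2\le t\le n^{1/3}$.} The $d$-anticodes are exactly the independent sets of $G_{n,t}$. By \Cref{lem:binary-forbidden-graph}, this graph is $D$-regular on $N=2^n$ vertices, with $D=\sum_{j<t}\binom nj\ge 1+n$ because $t\ge2$. \Cref{thm:zhao} then gives
\[
  |\mA_2(n,d)|\le (2^{D+1})^{2^n/(2D)}=2^{2^{n-1}(1+1/D)}=2^{(1/2+o(1))2^n}.
\]
Since $t=o(\sqrt n)$, \Cref{lem:binary-near-full-extremal} gives $m_2(n,d)=(1/2+o(1))2^n$, so the upper bound is $2^{(1+o(1))m_2(n,d)}$. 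This is exactly where the hypothesis $t\ge2$ enters: for $t=1$ we would have $D=1$, and Zhao's bound only gives the exponent $\log_2 3$.

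\emph{Large codiameter, $n^{1/3}<t=o(n)$.} Here I would use \eqref{eq:basic-reduction} together with \Cref{lem:maximal-count} and $|B(x,d)|\le 2^n$, which give $\log_2|\mM_2(n,d)|\le 2^{d+1}n=2^{n-t+1}n$. For the comparison I use the lower bound $m_2(n,d)\ge\binom nr$ with $r=\lfloor d/2\rfloor=n/2-\varepsilon n$, where $\varepsilon=t/(2n)+O(1/n)$. The standard entropy estimate $\binom nr\ge 2^{nh(r/n)}/(n+1)$ and the expansion $h(1/2-\varepsilon)=1-O(\varepsilon^2)$ yield
\[
  \log_2 m_2(n,d)\ge n-O(t^2/n)-O(\log n).
\]
Hence
\[
  \frac{\log_2|\mM_2(n,d)|}{m_2(n,d)}\le 2^{-t+O(t^2/n)+O(\log n)}.
\]
Because $t=o(n)$ we have $t^2/n=o(t)$, and because $t>n^{1/3}$ we have $\log n=o(t)$. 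So the exponent is $-(1-o(1))t\to-\infty$, which gives $\log_2|\mM_2(n,d)|=o(m_2(n,d))$ and hence the claim.

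The main obstacle is the intermediate range $t\gtrsim\sqrt n$. There \Cref{lem:binary-near-full-extremal} fails, since $m_2(n,d)$ is then noticeably smaller than $2^{n-1}$, so Zhao's bound alone is too weak. The key is to check that the ball lower bound still beats $2^{n-t}n$, via the quadratic entropy loss $O(t^2/n)=o(t)$. I would verify this entropy step carefully, including the precise form of the lower bound on $\binom nr$ that is used.
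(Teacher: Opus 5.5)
Your proposal is correct and follows essentially the same route as the paper: the same split at a polynomial threshold ($n^{1/3}$ here, $n^{\varepsilon}$ with $\varepsilon<1/2$ there). Below the threshold you use Zhao's bound together with the estimate $m_2(n,d)=(1/2+o(1))2^n$. Above it you use the Helly count $\log_2|\mM_2(n,d)|=O(n2^{n-t})$ against the ball bound $m_2(n,d)\ge\binom{n}{r}\ge 2^{n-O(t^2/n)-O(\log n)}$.

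The only difference is cosmetic. You obtain the lower bound on $\binom{n}{r}$ from the entropy estimate and $h(1/2-\varepsilon)=1-O(\varepsilon^2)$. The paper instead bounds the product $\binom{n}{r}/\binom{n}{M}$ and uses $\binom{n}{M}\ge 2^n/(n+1)$.
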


\begin{proof}
Fix $0<\varepsilon<1/2$ and split according to whether $t\le n^\varepsilon$ or $t>n^\varepsilon$. 
If $2\le t\le n^\varepsilon$, then $t=o(\sqrt n)$, so by \Cref{lem:binary-near-full-extremal}, $m_2(n,d)=\left(\frac12+o(1)\right)2^n$.
Since $\mA_2(n,d)$ is the family of independent sets of the $D_{n,t}$-regular graph $G_{n,t}$ on $2^n$ vertices, and $D_{n,t}\ge n+1$ for $t\ge2$, \Cref{thm:zhao} gives
\[
\log_2|\mA_2(n,n-t)|
\le
\frac{2^n}{2D_{n,t}}\log_2(2^{D_{n,t}+1}-1)
\le
2^{n-1}+\frac{2^n}{2D_{n,t}}
=
\left(\frac12+o(1)\right)2^n.
\]
Together with $|\mA_2(n,d)|\ge 2^{m_2(n,d)}$, this gives
$|\mA_2(n,d)|=2^{(1+o(1))m_2(n,d)}$.

Suppose now that $t>n^\varepsilon$. By \Cref{lem:maximal-count} and $|B(x,d)|\le2^n$,
\[
\log_2|\mM_2(n,n-t)|=O(n2^{n-t}).
\]
Let $M=\lfloor n/2\rfloor$, $r=\lfloor(n-t)/2\rfloor$, and $s=M-r=O(t)$. Since $\binom nM$ is the largest binomial coefficient, $\binom nM\ge \frac{2^n}{n+1}$, while
\[
\frac{\binom nr}{\binom nM}
=
\prod_{k=r+1}^{M}\frac{k}{n-k+1}
\ge
\left(\frac{r+1}{n-r}\right)^s
=
\left(1-O\left(\frac tn\right)\right)^s
=
2^{-O(t^2/n)}.
\]
Hence, by \eqref{eq:ball-lower},
\[
m_2(n,d)\ge \binom nr\ge 2^{\,n-O(t^2/n)-O(\log n)}.
\]
Therefore
\[
\frac{\log_2|\mM_2(n,d)|}{m_2(n,d)}
\le
2^{-t+O(t^2/n)+O(\log n)}.
\]
Since $t=o(n)$ we have $t^2/n=o(t)$. Since $t>n^\varepsilon$, we also have $\log n=o(t)$. 
Thus,
\[
\frac{\log_2|\mM_2(n,d)|}{m_2(n,d)}\to0,
\]
and the result follows from \eqref{eq:basic-reduction}.
\end{proof}

\begin{proposition}\label{lem:binary-endpoint}
At the binary endpoint $d=n-1$,
$
  |\mA_2(n,n-1)|
  =
  3^{2^{n-1}},
$
and
$
  m_2(n,n-1)=2^{n-1}.
$
Consequently,
\[
  |\mA_2(n,n-1)|
  =
  2^{(\log_2 3)\,m_2(n,n-1)}.
\]
\end{proposition}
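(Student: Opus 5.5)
The plan is to describe $(n-1)$-anticodes in $\F_2^n$ directly. Two binary vectors are at distance greater than $n-1$ exactly when they are at distance $n$, that is, when $y=\bar x:=x+\mathbf 1$ is the complement of $x$. In the language of the graph $G_{n,1}$ this is the case $t=1$: by \Cref{lem:binary-forbidden-graph} it is $D_{n,1}=\binom n0=1$-regular, so it is a perfect matching on $\F_2^n$ whose edges are the $2^{n-1}$ complementary pairs $\{x,\bar x\}$. The $(n-1)$-anticodes are precisely the independent sets of $G_{n,1}$.

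The count is then a product over the edges of the matching. An independent set is determined by choosing, independently for each edge $\{x,\bar x\}$, one of three options: neither endpoint, only $x$, or only $\bar x$. The edges are vertex-disjoint and cover $\F_2^n$, so these choices are independent and exhaustive. This gives $|\mA_2(n,n-1)|=3^{2^{n-1}}$.

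For the maximum size, an independent set meets each of the $2^{n-1}$ edges in at most one vertex, so $m_2(n,n-1)\le 2^{n-1}$. Equality is attained by choosing one endpoint from each pair, for instance all vectors with first coordinate $0$; this set has $2^{n-1}$ elements and diameter $n-1$. This also agrees with \Cref{lem:coordinate-subcube-lower} for $d=n-1$, and with \Cref{thm:kleitman}.

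The final identity is a rewriting: $3^{2^{n-1}}=2^{(\log_2 3)2^{n-1}}=2^{(\log_2 3)m_2(n,n-1)}$.

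There is no real obstacle here. The only point needing care is the observation that distance exceeding $n-1$ forces exact complementarity, which is what makes the conflict graph a perfect matching.
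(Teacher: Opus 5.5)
Your proposal is correct and matches the paper's proof: an $(n-1)$-anticode is exactly a set avoiding complementary pairs, the $2^{n-1}$ pairs $\{x,\bar x\}$ each allow three independent choices, and a maximum anticode takes one element from each pair. Phrasing this through the $1$-regular graph $G_{n,1}$ as a perfect matching, and giving the explicit extremal set with first coordinate $0$, are only cosmetic differences.
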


\begin{proof}
For every $x\in\F_2^n$, there is a unique vector at distance $n$ from $x$, namely its complement $\bar x$. Thus an $(n-1)$-anticode is exactly a set containing no complementary pair.

The space $\F_2^n$ is partitioned into $2^{n-1}$ complementary pairs
$\{x,\bar x\}$. From each pair, an $(n-1)$-anticode may contain
neither element, or exactly one of the two. Hence there are three
independent choices for each pair, giving
\[
  |\mA_2(n,n-1)|
  =
  3^{2^{n-1}}.
\]

A maximum $(n-1)$-anticode contains exactly one element from each
complementary pair. Therefore
$
  m_2(n,n-1)=2^{n-1}.
$
\end{proof}

We now collect the preceding estimates into our main theorem on the number of general anticodes.

\begin{theorem}\label{thm:nonlinear-main}
Fix a prime power $q$, let $d=d(n)$, and let $n\to\infty$.

\begin{itemize}
    \item 
If $4\le d\le n-1$, excluding the case $q=2$ and $d=n-1$, then
\[
  |\mA_q(n,d)|
  =
  2^{(1+o(1))m_q(n,d)}.
\]
 \item
For $d\in\{1,2,3\}$, we have
\[
  |\mA_q(n,d)|=2^{\Theta_q(n)}.
\]
\item
At the exceptional binary endpoint $d=n-1$, we have
\[
  |\mA_2(n,n-1)|
  =
  3^{2^{n-1}}
  =
  2^{(\log_2 3)m_2(n,n-1)}.
\]
\end{itemize}
\end{theorem}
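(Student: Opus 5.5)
The plan is to assemble the theorem from the propositions already proved, with the only real work being to show that the separate diameter regimes cover every sequence $d=d(n)$ with $4\le d\le n-1$. For the second bullet I will invoke \Cref{lem:nonlinear-small-diameter} directly. For the third bullet I will invoke \Cref{lem:binary-endpoint} directly. The substance is the first bullet.

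Since $d=d(n)$ is an arbitrary sequence, it need not fall into a single regime. I will therefore use the standard subsequence argument. Suppose the claim $\log_2|\mA_q(n,d)|/m_q(n,d)\to1$ fails. Then there are $\varepsilon>0$ and an infinite sequence of $n$ along which the ratio stays outside $[1-\varepsilon,1+\varepsilon]$. By compactness of $[0,1]$, I pass to a further subsequence on which $d/n\to\delta\in[0,1]$. I then split into cases.
\begin{enumerate}
\item If $\delta=0$, then $4\le d=o(n)$, and part~1 of \Cref{lem:nonlinear-bulk} applies.
\item If $0<\delta<1$, then part~2 of \Cref{lem:nonlinear-bulk} applies.
\item If $\delta=1$, set $t=n-d$, so $1\le t=o(n)$. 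For $q\ge3$, \Cref{lem:nonlinear-near-full-qary} applies. For $q=2$, the excluded case $d=n-1$ gives $t\ge2$, and \Cref{lem:nonlinear-near-full-binary} applies.
\end{enumerate}
In every case the ratio tends to $1$ along the subsequence, which is a contradiction.

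One point needs checking: the propositions are stated for $n\to\infty$ with $d(n)$ defined for all $n$, whereas here they are applied along subsequences. Their proofs only use the asymptotic hypotheses pointwise along $n$, so they apply verbatim along any subsequence. Alternatively, the subsequence values can be extended to a full sequence lying in the same regime. This bookkeeping step, making sure the regimes cover everything with no gap at the boundaries such as $d=4$ or $t=1$, is the only delicate point. All of those boundaries are handled by the hypotheses of the cited propositions.
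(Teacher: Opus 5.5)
Your proposal is correct and follows the paper's proof essentially verbatim: the second and third bullets are cited from the small-diameter and binary-endpoint propositions, and the first bullet comes from the subsequence/compactness argument on $d/n\to\delta\in[0,1]$, split into the cases $\delta=0$, $0<\delta<1$ and $\delta=1$. You also make explicit that excluding $d=n-1$ for $q=2$ forces $t\ge 2$, which the binary near-full proposition requires; the paper leaves this implicit.
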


\begin{proof}
The assertions for $d\in\{1,2,3\}$ and for the binary endpoint
$d=n-1$ follow from
\Cref{lem:nonlinear-small-diameter,lem:binary-endpoint}.

It remains to prove the first assertion. Consider any subsequence of
$n$. Since $d/n\in[0,1]$, there is a further subsequence along which $\frac dn\longrightarrow\delta$ for some $\delta\in[0,1]$.

If $\delta=0$, then $d=o(n)$, so
\Cref{lem:nonlinear-bulk} applies. If $0<\delta<1$, the second part of
\Cref{lem:nonlinear-bulk} applies. If $\delta=1$, then
$n-d=o(n)$, and the result follows from
\Cref{lem:nonlinear-near-full-qary,lem:nonlinear-near-full-binary}. 
Thus every subsequence has a further subsequence along which
\[
\frac{\log_2|\mA_q(n,d)|}{m_q(n,d)}\longrightarrow1.
\]
Hence the full sequence converges to $1$, proving the claim.
\end{proof}

\section{The linear count}\label{sec:linear}

For a linear subspace $C\le\F_q^n$, the diameter condition has a simple form.
Since $C-C=C$, we have
\[
  \diam(C)=\max_{x,y\in C}\dist(x,y)=\max_{c\in C}\wt(c).
\]
Thus $\diam(C)\le d$ means exactly that every word of $C$ has Hamming weight at most $d$.
Analogous to the general setting, we denote by 
\[
  \LL_q(n,d):=\{C\le\F_q^n:\diam(C)\le d\}
\]
the set of all linear $d$-anticodes in $\F_q^n$. 
We will extensively use the notion of \emph{support} for counting linear anticodes, which is defined as 
\[
  \supp(C):=\{i\in[n]:\exists c\in C\text{ with }c_i\ne0\}
\]
for some $C\le\F_q^n$. 
We say that $C\le\F_q^n$ has \emph{full support} if $\supp(C)=[n]$.
Equivalently, no coordinate is identically zero on $C$.
Every linear code has full support after restriction to its support. 
Therefore, our strategy is to first choose the support $S\subseteq \{1,\dots,n\}$ of size $s$, and then count full-support codes on that support.

\begin{lemma}\label{lem:full-support-upper}
If $C\le\F_q^s$ has full support and $\diam(C)\le d$, then
\[
  s\le \frac{qd}{q-1}.
\]
\end{lemma}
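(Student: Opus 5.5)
The plan is a double-counting (averaging) argument over all codewords, which is the classical Plotkin-type computation. Let $k=\dim C\ge1$; the case $C=0$ forces $s=0$ and is trivial. We count the total weight
\[
  W:=\sum_{c\in C}\wt(c)=\sum_{i=1}^{s}\bigl|\{c\in C: c_i\ne0\}\bigr|
\]
in two ways.

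First, we evaluate $W$ coordinatewise. For each $i\in[s]$, the projection $\pi_i:C\to\F_q$, $c\mapsto c_i$, is a linear functional. It is nonzero because $C$ has full support, hence surjective. Its kernel therefore has size $q^{k-1}$, so exactly $q^k-q^{k-1}=(q-1)q^{k-1}$ codewords have $c_i\ne0$. Summing over the $s$ coordinates gives
\[
  W=s(q-1)q^{k-1}.
\]

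Second, we bound $W$ using the diameter condition. Every codeword has weight at most $d$, since $\diam(C)=\max_{c\in C}\wt(c)$. The zero word has weight $0$, so
\[
  W\le (q^k-1)d.
\]
Combining the two expressions yields
\[
  s(q-1)q^{k-1}\le (q^k-1)d< q^k d,
\]
and hence $s\le \frac{qd}{q-1}$; in fact the inequality is strict whenever $d\ge1$.

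There is no real obstacle here. The only point needing care is the surjectivity of each coordinate functional, which is exactly where the full-support hypothesis enters. Without it, zero coordinates would contribute nothing to $W$, and the count $(q-1)q^{k-1}$ per coordinate would fail.
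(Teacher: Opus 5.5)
Your proof is correct and is essentially the paper's argument: both double-count the total weight coordinatewise as $s(q-1)q^{k-1}$, using that each coordinate functional is nonzero by full support, and then compare with the diameter bound (the paper phrases this as the average weight $s(q-1)/q\le d$). Your extra observation that the zero word has weight $0$ gives the slightly sharper bound $s\le d\,(q^k-1)/\bigl((q-1)q^{k-1}\bigr)$, which simplex codes attain; the paper does not need this refinement.
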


\begin{proof}
Suppose $C\le\F_q^s$ has full support and dimension $k$.
For each coordinate $i\in[s]$, the map
\[
  C\to\F_q,\qquad c\mapsto c_i
\]
is a non-zero linear functional.
Hence it is non-zero on exactly $(q-1)q^{k-1}$ words of $C$.
Summing this over all coordinates gives
\[
  \sum_{c\in C}\wt(c)=s(q-1)q^{k-1}.
\]
Since $|C|=q^k$, the average weight of a word of $C$ is
\[
  \frac1{|C|}\sum_{c\in C}\wt(c)=\frac{s(q-1)}q.
\]
If $\diam(C)\le d$, then every word has weight at most $d$, and therefore $\frac{s(q-1)}{q} \le d$. 
\end{proof}

\begin{remark}
The support bound is a Plotkin-type averaging argument, analogous to the classical Plotkin bound; see \cite{HallNotes}.
\end{remark}

%
Lemma \ref{lem:full-support-upper} shows that every linear $d$-anticode is supported on at most $\lfloor  dq/(q-1)\rfloor$ coordinates, which motivates counting anticodes first by their support size. 
Let \(a_q(s,d)\) be the number of full-support linear \(d\)-anticodes in \(\mathbb F_q^s\).
\[
  a_q(s,d)
  :=
  \left|
  \left\{
  C\le\F_q^s:
  \supp(C)=[s],\ \diam(C)\le d
  \right\}
  \right|
\]
for integers $s,d\ge0$. 

\begin{proposition}\label{thm:linear-exact}
For every prime power $q$ and every $0\le d<n$,
\[
  |\LL_q(n,d)|
  =
  \sum_{s=0}^{\lfloor dq/(q-1)\rfloor}
  \binom ns a_q(s,d)  \le
  \sum_{s=0}^{\lfloor dq/(q-1)\rfloor}
  \binom ns
  \sum_{r=0}^{s}\bqbinom sr.
\]
\end{proposition}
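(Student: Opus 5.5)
The plan is to partition $\LL_q(n,d)$ according to the support of the code. For each $C\in\LL_q(n,d)$ put $S:=\supp(C)\subseteq[n]$ and $s:=|S|$. Let $\pi_S:\F_q^n\to\F_q^S\cong\F_q^s$ be the coordinate projection onto $S$. Since every codeword vanishes outside $S$, the map $C\mapsto \pi_S(C)$ is a bijection between
\[
\{C\le\F_q^n:\supp(C)=S,\ \diam(C)\le d\}
\quad\text{and}\quad
\{C'\le\F_q^s:\supp(C')=[s],\ \diam(C')\le d\}.
\]
The inverse extends each word by zeros on $[n]\setminus S$. Both maps preserve linearity, the support condition, and Hamming weights, and the diameter of a linear code is its maximum weight. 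So for each fixed $S$ of size $s$ the number of anticodes with support exactly $S$ is $a_q(s,d)$. It depends only on $s$, after identifying $S$ with $[s]$ via the order-preserving bijection.

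Summing over $S$ then gives $|\LL_q(n,d)|=\sum_{s=0}^{n}\binom ns a_q(s,d)$. The range of summation can be cut to $s\le\lfloor dq/(q-1)\rfloor$: by \Cref{lem:full-support-upper}, a full-support anticode in $\F_q^s$ with diameter at most $d$ forces $s\le qd/(q-1)$, and $s$ is an integer. Hence $a_q(s,d)=0$ for larger $s$. If $\lfloor dq/(q-1)\rfloor>n$, the extra terms vanish anyway by the convention $\binom ns=0$ for $s>n$, so the identity holds as stated.

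For the $s=0$ term, only the zero subspace has empty support, and it trivially has diameter $0\le d$, so $a_q(0,d)=1$. This matches $\sum_{r=0}^{0}\bqbinom 0r=1$ in the upper bound.

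For the inequality, bound $a_q(s,d)$ by the number of all subspaces of $\F_q^s$, which is $\sum_{r=0}^s\bqbinom sr$. Multiplying by $\binom ns\ge0$ and summing gives the stated upper bound.

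There is no real obstacle here. The only points needing care are checking that the restriction map is a weight-preserving bijection, and checking the boundary conventions ($s=0$, and $s>n$ when $d$ is close to $n$).
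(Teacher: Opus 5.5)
Your proof is correct and follows the paper's argument. You partition by support, identify codes with support exactly $S$ with full-support anticodes in $\F_q^S$ via puncturing and zero-extension, cut the sum off using \Cref{lem:full-support-upper}, and bound $a_q(s,d)$ by the total number of subspaces of $\F_q^s$. Your checks of the boundary cases ($s=0$, and $s>n$ where $\binom ns=0$) are details the paper leaves implicit.
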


\begin{proof}

Puncturing a code outside its support \(S\) gives a full-support linear $d$-anticode in $\F_q^S$, and conversely every such code extends uniquely by zeros outside $S$. 
Hence, the codes with support size $s$ are in bijection with pairs $(S,C')$ with $|S|=s$ and $\supp(C')=S$. As there are $\binom{n}{s}$ choices for the support, we get
\[
  |\LL_q(n,d)|
  =
  \sum_{s=0}^{\lfloor dq/(q-1)\rfloor}
  \binom ns a_q(s,d)
\]
The inequality follows bounding above by $a_q(s,d)$ with the total number of subspaces of $\F_q^S$.
\end{proof}

The exact formula gives the required upper bound once all subspaces on a fixed support are counted. 
For the lower bound, we use subspaces supported on exactly $d$ coordinates.

\begin{lemma}\label{lem:coordinate-lower}
For every $1\le d<n$,
\[
  |\LL_q(n,d)|
  \ge
  \binom nd
  \left(
    \bqbinom d{\lfloor d/2\rfloor}
    -d\bqbinom {d-1}{\lfloor d/2\rfloor}
  \right).
\]
\end{lemma}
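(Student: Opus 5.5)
We restrict attention to subspaces supported on exactly $d$ coordinates. By \Cref{thm:linear-exact}, $|\LL_q(n,d)|\ge\binom nd a_q(d,d)$, since every term in the exact sum is nonnegative. So it suffices to show
\[
  a_q(d,d)\ge \bqbinom d{r}-d\bqbinom{d-1}{r},\qquad r:=\lfloor d/2\rfloor .
\]
The diameter condition is automatic here: every word of $\F_q^d$ has weight at most $d$, so every subspace of $\F_q^d$ is a $d$-anticode. The only remaining requirement is full support.

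We therefore count $r$-dimensional subspaces $C\le\F_q^d$ with $\supp(C)=[d]$. A subspace fails to have full support exactly when it lies in some coordinate hyperplane $H_i=\{x:x_i=0\}$ for some $i\in[d]$. Each $H_i\cong\F_q^{d-1}$ contains exactly $\bqbinom{d-1}{r}$ subspaces of dimension $r$. A union bound over the $d$ hyperplanes then shows that at most $d\bqbinom{d-1}{r}$ of the $\bqbinom dr$ subspaces of dimension $r$ fail to have full support.

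Hence
\[
  a_q(d,d)\ge\bqbinom dr-d\bqbinom{d-1}r.
\]
Substituting this into the first display gives the claimed bound, with the convention that $\bqbinom{d-1}{r}=0$ when $r>d-1$, which happens only for $d=1$. No step is a real obstacle. The only care needed is to note that the right-hand side may be negative for small $d$, but then the inequality holds trivially because $|\LL_q(n,d)|\ge0$.
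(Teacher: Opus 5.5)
Your proof is correct and is essentially the paper's argument: fix a $d$-set of coordinates, note that the diameter condition holds automatically there, and discard the non-full-support $\lfloor d/2\rfloor$-dimensional subspaces with a union bound over the $d$ coordinate hyperplanes; your appeal to \Cref{thm:linear-exact} for $|\LL_q(n,d)|\ge\binom nd a_q(d,d)$ just repackages the paper's remark that distinct supports yield distinct codes. One harmless slip: $r>d-1$ never occurs, since for $d=1$ one has $r=0=d-1$ and the bound reads $1-1=0$, so the convention you state is never actually needed.
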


\begin{proof}
Choose a set $S\subseteq[n]$ with $|S|=d$.
Every subspace of $\F_q^S$ has all its words supported inside $S$.
Hence, every word has weight at most $d$, so every subspace of $\F_q^S$, extended by zero outside $S$, is a linear $d$-anticode in $\F_q^n$.
For a fixed support $S$ of size $d$, the number of $r$-dimensional subspaces of
$\F_q^S$ is $\bqbinom dr$, and this is largest when $r$ is closest to $d/2$.
So we choose $r=\lfloor d/2\rfloor$ to maximize this number.

A subspace does not have support exactly $S$ precisely when one of the coordinates in $S$ is identically zero on it.
Equivalently, it is contained in one of the $d$ coordinate hyperplanes
\[
  H_i=\{x\in\F_q^S:x_i=0\}.
\]
Each $H_i$ is isomorphic to $\F_q^{d-1}$, and therefore contains $\bqbinom {d-1}r$ $r$-dimensional subspaces.
By the union bound, the number of $r$-dimensional subspaces of $\F_q^S$ which do not have full support is at most $d\bqbinom {d-1}r$.
Therefore at least
\[
  \bqbinom dr-d\bqbinom {d-1}r
\]
$r$-dimensional subspaces have support exactly $S$.
Thus different choices of $S$ give different codes.
Since there are $\binom nd$ choices for $S$, we get
\[
  |\LL_q(n,d)|
  \ge
  \binom nd
  \left(
    \bqbinom dr-d\bqbinom {d-1}r
  \right).
\]
Substituting $r=\lfloor d/2\rfloor$ gives the claim.
\end{proof}

We will use \Cref{thm:linear-exact} and \Cref{lem:coordinate-lower} in \Cref{thm:linear-main} to derive asymptotic estimates and bounds on the number of linear $d$-anticodes. We first treat the near-full regime separately.

\begin{proposition}\label{prop:linear-near-full}
Fix a prime power $q$, and let $d=n-t$, where $0 \le t=o(n)$.
Then
\[
  |\LL_q(n,d)|
  =
  q^{(1/4+o(1))n^2}.
\]
\end{proposition}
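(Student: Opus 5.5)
We prove matching upper and lower bounds on $\log_q|\LL_q(n,d)|$, each of the form $(1/4+o(1))n^2$. The upper bound does not use the diameter constraint at all: every linear anticode is a subspace of $\F_q^n$. We therefore compare with the total number of subspaces, or equivalently take the case $s=n$ in \Cref{thm:linear-exact}. By \Cref{lem:gaussian-estimates}(i),
\[
  |\LL_q(n,d)|\le\sum_{r=0}^{n}\bqbinom nr=q^{n^2/4+O_q(\log n)}=q^{(1/4+o(1))n^2}.
\]
This settles the upper bound, including the case $t=0$, where $\LL_q(n,n)$ is the set of all subspaces.

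For the lower bound with $t\ge1$ (so $1\le d<n$), we apply \Cref{lem:coordinate-lower}. It gives
\[
  |\LL_q(n,d)|\ge\binom nd\left(\bqbinom d{\lfloor d/2\rfloor}-d\bqbinom{d-1}{\lfloor d/2\rfloor}\right)\ge \bqbinom d{\lfloor d/2\rfloor}-d\bqbinom{d-1}{\lfloor d/2\rfloor}.
\]
Since $d=n-t\to\infty$, \Cref{lem:gaussian-estimates}(ii) with $t$ there replaced by $d$ shows that the right-hand side is $\Theta_q(q^{d^2/4})$.

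The remaining step is to convert $q^{d^2/4}$ into $q^{(1/4+o(1))n^2}$. We have
\[
  d^2=(n-t)^2=n^2-2nt+t^2\ge n^2-2nt,
\]
and $2nt=o(n^2)$ because $t=o(n)$. Hence $d^2/4=(1/4+o(1))n^2$. The constant hidden in $\Theta_q$ contributes only $O_q(1)$ to the exponent, which is absorbed into the $o(n^2)$ term. The case $t=0$ is covered directly, since there $|\LL_q(n,n)|$ is the full subspace count, which is at least $q^{\lfloor n^2/4\rfloor}$.

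No step here is a real obstacle. The only point needing care is that the error term $2nt$ is genuinely $o(n^2)$. This uses exactly the hypothesis $t=o(n)$, and it explains why a linear codiameter would instead lead to the weaker $\alpha^2/4$ versus $\gamma^2/4$ bounds of the linear-diameter regime.
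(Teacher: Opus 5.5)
Your proof is correct and follows the paper's argument. The upper bound counts all subspaces of $\F_q^n$ via \Cref{lem:gaussian-estimates}(i), and the lower bound counts subspaces supported inside a single $d$-set, then uses $d^2\ge n^2-2nt=(1-o(1))n^2$. The only difference is how you get the lower bound. You pass through \Cref{lem:coordinate-lower} (full-support subspaces, then discarding $\binom nd$), which forces the separate $t=0$ case. The paper simply counts all subspaces of $\F_q^S$ for one fixed $S$ with $|S|=d$, which gives $q^{\lfloor d^2/4\rfloor}$ directly for every $t\ge0$.
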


\begin{proof}
As an upper bound, we simply use all subspaces of $\F_q^n$, which gives us (together with \Cref{lem:gaussian-estimates})
\[
  |\LL_q(n,d)|
 \le
  \sum_{r=0}^{n}\bqbinom nr
  =
  q^{n^2/4+O_q(\log n)}  \le
  q^{(1/4+o(1))n^2}.
\]

For a lower bound, fix one set $S\subseteq[n]$ with $|S|=d$.
Every subspace of $\F_q^S$, extended by zero outside $S$, is a linear
$d$-anticode in $\F_q^n$. Hence, by \Cref{lem:gaussian-estimates},
\[
  |\LL_q(n,d)|
  \ge
  \sum_{r=0}^{d}\bqbinom d{r}
  \ge
  \bqbinom d{\lfloor d/2\rfloor}
  \ge
  q^{\lfloor d^2/4\rfloor} .
\]
Since $d=n-o(n)$, we have $\frac{d}{n}\to1$, and hence  $d^2=(1+o(1))n^2$.
Therefore,
\[
  q^{(1/4+o(1))n^2}
  \le
  |\LL_q(n,d)|
  \le
  q^{(1/4+o(1))n^2},
\]
which proves the claim.
\end{proof}

Combining the above estimates gives the main theorem on the number of linear $d$-anticodes:

\begin{theorem}\label{thm:linear-main}
Fix a prime power $q$, let \(n\to\infty\), and let \(0\le d=d(n)\le n\) be an integer. 
\begin{enumerate}
  \item If $d$ is fixed, then there is an integer $\sigma_q(d)$ satisfying $d\le \sigma_q(d)\le
    \left\lfloor\frac{qd}{q-1}\right\rfloor$ such that
  \[
    |\LL_q(n,d)|
    =
    \Theta_{q,d}\!\left(n^{\sigma_q(d)}\right). 
  \]
  In particular, if $q>d+1$, then $\sigma_q(d)=d$ and $|\LL_q(n,d)|
    =
    \Theta_{q,d}\!\left(n^{d}\right)$.

  \item If $d\to\infty$ and $d=o(n)$, then
  \[
    |\LL_q(n,d)|
    =
    q^{\Theta_q\!\left(d^2+d\log_q(n/d)\right)}.
  \]

 \item If $d/n\to\alpha$, where $0<\alpha<1$, and $\gamma=\min\left\{1,\frac{q\alpha}{q-1}\right\}$ then 
  \[
    q^{(\alpha^2/4+o(1))n^2}
    \le
    |\LL_q(n,d)|
    \le
    q^{(\gamma^2/4+o(1))n^2}.
  \]

  \item If $n-d=o(n)$, then
  \[
    |\LL_q(n,d)|
    =
    q^{(1/4+o(1))n^2}.
  \]
\end{enumerate}
\end{theorem}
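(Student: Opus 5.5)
The plan is to derive all four parts from the exact support decomposition in \Cref{thm:linear-exact}, the lower bound of \Cref{lem:coordinate-lower}, and \Cref{prop:linear-near-full}. Part~4 is exactly \Cref{prop:linear-near-full}, so nothing new is needed there.

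\textbf{Part 1 (fixed $d$).} Set $s_{\max}=\lfloor qd/(q-1)\rfloor$. By \Cref{thm:linear-exact}, $|\LL_q(n,d)|=\sum_{s\le s_{\max}}\binom ns a_q(s,d)$, where each $a_q(s,d)$ is a constant depending only on $q,d$. Define $\sigma_q(d):=\max\{s: a_q(s,d)>0\}$; this is finite by \Cref{lem:full-support-upper}. Since $\binom ns=\Theta_s(n^s)$ by \Cref{lem:binomial-estimates}, the top term dominates and $|\LL_q(n,d)|=\Theta_{q,d}(n^{\sigma_q(d)})$. The bound $\sigma_q(d)\ge d$ holds because $\F_q^d$ itself is a full-support $d$-anticode on $d$ coordinates. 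For $q>d+1$ we have $qd/(q-1)=d+d/(q-1)<d+1$, so $s_{\max}=d$ and hence $\sigma_q(d)=d$.

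\textbf{Part 2 ($d\to\infty$, $d=o(n)$).}
\begin{itemize}
\item For the upper bound, use $\sum_r\bqbinom sr\le (s+1)C_qq^{s^2/4}$ from \Cref{lem:gaussian-uniform}, with $s\le s_{\max}\le 2d$. Also $\binom ns\le (en/s)^s$, and $\sum_{s\le 2d}\binom ns\le (2d+1)(en/(2d))^{2d}$ for large $n$, since $2d\le n/2$. Together these give $\log_q|\LL_q(n,d)|=O_q(d^2+d\log_q(n/d))$; the $\log d$ and constant terms are absorbed since $d\to\infty$.
\item For the lower bound, \Cref{lem:coordinate-lower} together with \Cref{lem:gaussian-estimates}(ii) gives $|\LL_q(n,d)|\ge \binom nd\,\Omega_q(q^{d^2/4})$. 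Combined with $\binom nd\ge (n/d)^d$, this yields $\log_q|\LL_q(n,d)|\ge d^2/4+d\log_q(n/d)-O_q(1)$, which is $\Omega_q(d^2+d\log_q(n/d))$.
\end{itemize}

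\textbf{Part 3 ($d/n\to\alpha$).}
\begin{itemize}
\item The lower bound is the same computation as in Part~2: $\binom nd\ge1$ and $\bqbinom d{\lfloor d/2\rfloor}(1-o(1))\ge q^{\lfloor d^2/4\rfloor}/2$ give $q^{(\alpha^2/4+o(1))n^2}$.
\item For the upper bound, \Cref{thm:linear-exact} gives at most $n+1$ terms, each at most $2^n(s+1)C_qq^{s^2/4}$, where $s\le\min\{n,qd/(q-1)\}=(\gamma+o(1))n$. The factors $2^n$ and $\mathrm{poly}(n)$ are $q^{o(n^2)}$, so the bound is $q^{(\gamma^2/4+o(1))n^2}$.
\end{itemize}

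\textbf{Main obstacle.} The only delicate point is Part~2. There we must check that the lower-order contributions from the sum over $s$ and the Gaussian constants $C_q$ and $(s+1)$ are absorbed into the $\Theta_q$. This relies on $d\to\infty$, which ensures $d^2+d\log_q(n/d)\to\infty$. We must also check that the ratio $n/s$ with $s\le 2d$ stays comparable to $n/d$ inside the logarithm, which holds because $\log(n/s)\le\log(n/d)+O(1)$ when $s\ge d$, while terms with $s<d$ are smaller still.
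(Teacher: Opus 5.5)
Your proposal is correct and follows essentially the same route as the paper: the support decomposition of \Cref{thm:linear-exact} with the Gaussian-binomial bounds gives the upper estimates, \Cref{lem:coordinate-lower} gives the lower estimates, and Part~4 is \Cref{prop:linear-near-full}. The only cosmetic difference is in Part~2, where you bound $\sum_{s\le 2d}\binom ns$ using the monotonicity of $(en/s)^s$ rather than by $O_q(d)\binom n{\lfloor qd/(q-1)\rfloor}$; both give the same $O_q(d\log_q(n/d))$ contribution.
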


\begin{proof}
We again treat the different diameter regimes separately. Note that -- contrary to the general anticode case in \Cref{lem:nonlinear-bulk} -- we distinguish between bounded diameters and diameters tending to infinity.

\paragraph{1. Fixed diameter.}
Fix $d$ and let $\sigma_q(d):=\max\{s:a_q(s,d)>0\}$.
Then $a_q(s,d)$ and hence $\sigma_q(d)$ are independent of $n$. 

Since $\F_q^d$ is a full-support linear $d$-anticode, we have
$a_q(d,d)>0$, and hence $\sigma_q(d)\ge d$. On the other hand, \Cref{lem:full-support-upper} implies that
$
  \sigma_q(d)\le \left\lfloor\frac{qd}{q-1}\right\rfloor.
$

If $q>d+1$, then the upper bound equals $d$, so $\sigma_q(d)=d$.

Moreover, by \Cref{lem:binomial-estimates}, we have
\[
  \binom ns=\frac{n^s}{s!}+O_s(n^{s-1}),
\]
which implies, together with \Cref{thm:linear-exact},
\begin{align*}
  |\LL_q(n,d)|
  &=\sum_{s=0}^{\sigma_q(d)}\binom ns a_q(s,d)=\sum_{s=0}^{\sigma_q(d)} \left(\frac{n^s}{s!}+O_s(n^{s-1})\right) a_q(s,d) \\
  &=\left(\frac{n^{\sigma_q(d)}a_q({\sigma_q(d)},d)}{{\sigma_q(d)}!}+O_{q,d}(n^{{\sigma_q(d)}-1}) \right)  
+\sum_{s=0}^{\sigma_q(d)-1} \left(\underbrace{\frac{n^s}{s!}}_{\leq n^{{\sigma_q(d)}-1}}+O_s(n^{s-1})\right) \underbrace{a_q(s,d)} _{\in O(1)} \\
  &=\underbrace{\frac{a_q(\sigma_q(d),d)}{\sigma_q(d)!}}_{\in \mathbb R_{> 0}}
    n^{\sigma_q(d)}+O_{q,d}(n^{\sigma_q(d)-1}) 
    = \Theta_{q,d}(n^{\sigma_q(d)})
\end{align*}

Since the constants can be chosen uniformly over any finite set of diameters, the same estimate holds for $d=d(n)\le D$, with implicit constants depending only on $q$ and $D$.

\paragraph{2. Sublinear diameter.} 
When $d$ grows with $n$, both the choice of support and the number of subspaces on that support contribute to the exponent.
Assume $q$ is fixed, $d\to\infty$, and $d=o(n)$. 
By \Cref{lem:coordinate-lower} and
\Cref{lem:gaussian-estimates},

\[
  |\LL_q(n,d)|
  \ge \Omega_q\!\left(\binom nd q^{d^2/4}\right).
\]
Since
\[
  \log_q\binom nd=(1+o(1))d\log_q\frac nd,
\]
we get
\[
  |\LL_q(n,d)|
  \ge q^{\Omega_q(d^2+d\log_q(n/d))}.
\]
By the upper bound in \Cref{thm:linear-exact} and \Cref{lem:gaussian-estimates},
\[
  |\LL_q(n,d)|
  \le
  \sum_{s=0}^{\lfloor qd/(q-1)\rfloor}
  \binom ns\sum_{r=0}^{s}\bqbinom sr \le
  O_q(d)\binom n{\lfloor qd/(q-1)\rfloor}
  q^{O_q(d^2+\log d)}.
\]
Also,
\[
  \log_q\binom n{\lfloor qd/(q-1)\rfloor}
  =O_q\!\left(d\log_q\frac nd\right).
\]
Therefore,
\[
  |\LL_q(n,d)|
  \le q^{O_q(d^2+d\log_q(n/d))}.
\]

\paragraph{3. Linear diameter.}

Fix $q$ and suppose $d/n\to\alpha$, where $0<\alpha<1$. 
Since $d^2=(\alpha^2+o(1))n^2$ and $\binom nd=2^{(h(\alpha)+o(1))n}$ contributes only $q^{o(n^2)}$, by \Cref{lem:coordinate-lower,lem:gaussian-estimates},
\[
  |\LL_q(n,d)|
  \ge \Omega_q\!\left(\binom nd q^{d^2/4}\right)
  =q^{(\alpha^2/4+o(1))n^2}.
\]
For the upper bound, put $\gamma=\min\left\{1,\frac{q\alpha}{q-1}\right\}$.
Using the support bound from \Cref{thm:linear-exact}, $\sum_{s\le n}\binom ns\le2^n$, and \Cref{lem:gaussian-estimates}, we get
\[
  |\LL_q(n,d)|
  \le 2^n q^{\gamma^2n^2/4+o(n^2)}
  =q^{(\gamma^2/4+o(1))n^2}.
\]

\paragraph{4. Near-full diameter $n-d=o(n)$.}

This follows from \Cref{prop:linear-near-full}.
\end{proof}

\begin{remark}
\Cref{thm:linear-exact} tells us that a better estimate of $a_q(s,d)$ will improve the bounds in the sublinear and linear-diameter regimes. 
However, for fixed $d$, the largest support size for which $a_q(s,d)\ne0$ determines the polynomial degree exactly.   
\end{remark}

\begin{remark}
The support bound in \Cref{lem:full-support-upper} is asymptotically tight as $d\to\infty$. To see this, consider the following construction.
Let $\Sim_q(k)\le \F_q^{\ell_k}$ be the $q$-ary simplex code, where $\ell_k=(q^k-1)/(q-1)$.  
It has full support and every non-zero codeword has weight $w_k=q^{k-1}$, since a non-zero linear functional vanishes on $(q^{k-1}-1)/(q-1)$ of the projective points in $\operatorname{PG}(k-1,q)$.  
Hence $\diam(\Sim_q(k))=w_k$.

Choose $k=k(d)$ with $w_k\to\infty$ and $w_k=o(d)$, put $m_k=\lfloor d/w_k\rfloor$ and $r_k=d-m_kw_k$, and define $C=\Sim_q(k)^{\oplus m_k}\oplus\F_q^{r_k}$ on disjoint coordinate blocks.  
Then $C$ has full support, diameter $d$, and support size $m_k\ell_k+r_k=(q/(q-1)+o(1))d$, because $\ell_k/w_k=q/(q-1)+o(1)$ and $r_k< w_k=o(d)$.
See \cite[Theorem~2.7.5]{HuffmanPless2003}.
\end{remark}

\begin{remark}
A direct lower bound follows by fixing a set $S\subseteq[n]$ of size $d$ and counting all subspaces supported inside $S$:
  $|\LL_q(n,d)|   \ge   \sum_{r=0}^{d}\bqbinom dr  \ge  q^{\lfloor d^2/4\rfloor}$.
We cannot simply multiply this count by $\binom nd$.
Indeed, a code with support size $t\le d$ is contained in $\binom{n-t}{d-t}$ coordinate spaces of dimension $d$.
Thus different choices of $S$ can produce the same code.
The full-support restriction in \Cref{lem:coordinate-lower} avoids this overcounting.

This construction parallels the general setting, where counting all subsets of a fixed $d$-anticode, such as a Hamming ball of radius $\lfloor d/2\rfloor$, gives a lower bound.
In both settings, the diameter condition is inherited by the objects being counted.
For general anticodes, counting subsets of one maximum anticode gives the correct leading asymptotic term in the regimes covered by \Cref{thm:nonlinear-main}.
The corresponding conclusion does not hold in general for linear anticodes: counting subspaces of one maximum linear anticode need not give the correct leading exponent.
The following binary construction demonstrates this by strictly improving the exponent $\alpha^2/4$ for every fixed $0<\alpha<1$.
\end{remark}

\subsection*{An improvement for $q=2$ and linear diameter}

For $d=\lfloor\alpha n\rfloor$, we use a random generator matrix on $s\sim d/\theta$ coordinates, where $\max\{\alpha,1/2\}<\theta<1$.
For every admissible choice of $\theta$, this construction gives a lower bound on the number of linear $d$-anticodes, with an exponent depending on $\theta$. 
We then optimize this exponent over $\theta$, obtaining a strict improvement over the exponent $\alpha^2/4$ in \Cref{thm:linear-main} for every $0<\alpha<1$.

For $1/2\le\theta\le1$, define
\[
  g(\theta)
  :=
  \begin{cases}
    h(\theta)(1-h(\theta)),
      & \text{if } h(\theta)\ge \frac12,\\
    \frac14,
      & \text{if } h(\theta)\le \frac12.
  \end{cases}
\]
For $0<\alpha<1$, define
\[
  \phi(\alpha)
  :=
  \max_{\max\{\alpha,1/2\}\le\theta\le1}
  \left(\frac{\alpha}{\theta}\right)^2g(\theta).
\]

\begin{proposition}\label{prop:binary-random-linear-lower}
Fix $0<\alpha<1$, and let $d=\lfloor\alpha n\rfloor$. Then
\[
  |\LL_2(n,d)|
  \ge
  2^{(\phi(\alpha)+o(1))n^2}.
\]
\end{proposition}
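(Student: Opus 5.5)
Fix $\theta$ with $\max\{\alpha,1/2\}<\theta<1$ (the endpoints follow by continuity of $\theta\mapsto(\alpha/\theta)^2g(\theta)$, and $\theta=\alpha$ alone reproduces the bound $\alpha^2/4$ of \Cref{thm:linear-main}). Set $s=\lfloor d/\theta\rfloor\le n$, so that $s=(\alpha/\theta+o(1))n$. We then count $k$-dimensional subspaces of $\F_2^{s}$, extended by zero to $\F_2^n$, all of whose codewords have weight at most $d\approx\theta s$. The plan is a first-moment/deletion argument. Let $G$ be a uniformly random $k\times s$ binary matrix. For each fixed nonzero $u\in\F_2^k$, the word $uG$ is uniform on $\F_2^s$, so
\[
\mathbb P(\wt(uG)>\theta s)\le 2^{-s(1-h(\theta))}
\]
by \Cref{lem:binomial-tail} with $p=1/2$, because $D_2(\theta\|1/2)=1-h(\theta)$. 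A union bound over the $2^k-1$ nonzero $u$ then shows that, as long as $k\le (1-h(\theta)-\varepsilon)s$, the probability that $G$ has full rank and every nonzero codeword of its row space has weight at most $d$ is $1-o(1)$. Full rank fails with probability at most $2^{k-s}=o(1)$.

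Next we pass from matrices to subspaces. The number of good matrices is at least $(1-o(1))2^{ks}$, and each $k$-dimensional subspace has at most $|GL_k(\F_2)|\le 2^{k^2}$ generator matrices. Hence the number of $k$-dimensional linear $d$-anticodes supported inside a fixed $s$-set is at least $(1-o(1))2^{k(s-k)}$. We maximise $k(s-k)$ subject to $k\le(1-h(\theta)-\varepsilon)s$. If $1-h(\theta)\ge 1/2$, i.e.\ $h(\theta)\le1/2$, take $k=\lfloor s/2\rfloor$ and get $s^2/4$. Otherwise take $k=(1-h(\theta)-\varepsilon)s$ and get $(h(\theta)(1-h(\theta))-O(\varepsilon))s^2$, which is exactly $g(\theta)s^2$ up to $\varepsilon$. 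Since $s^2=((\alpha/\theta)^2+o(1))n^2$, letting $\varepsilon\to0$ slowly gives $|\LL_2(n,d)|\ge 2^{((\alpha/\theta)^2g(\theta)+o(1))n^2}$. We do not need distinct supports, because a single $s$-set already suffices at this exponential scale; this also avoids the overcounting issue raised in the remark above. Maximising over $\theta$ gives $\phi(\alpha)$.

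The main obstacle is the union-bound step: the tail estimate must be uniform, and the margin $\varepsilon$ must beat both the $2^k$ factor and the rounding in $d=\lfloor\alpha n\rfloor$ and $s=\lfloor d/\theta\rfloor$. Concretely, we need $\theta s\le d$ together with the strict inequality $\theta>1/2$, so that \Cref{lem:binomial-tail} applies with $a=d/s\ge\theta-O(1/n)>p$. The resulting error is absorbed into $\varepsilon$. When $h(\theta)\le1/2$ we need $1/2\le 1-h(\theta)-\varepsilon$, which may fail at the boundary $h(\theta)=1/2$; there we use $k=(1-h(\theta)-\varepsilon)s$ and continuity of $x(1-x)$ at $x=1/2$. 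Beyond these points the argument is routine.
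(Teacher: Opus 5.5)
Your proposal is correct and follows essentially the paper's proof: you take a uniformly random $k\times s$ generator matrix on $s=\lfloor d/\theta\rfloor$ coordinates, apply the binomial tail bound with $D_2(\theta\|1/2)=1-h(\theta)$ and a union bound over $u\ne0$, bound the rank failure, divide by at most $2^{k^2}$ ordered bases, and optimise over $k$ and $\theta$. You rightly drop the paper's zero-column check, which is unnecessary for a single fixed $S$, and you handle the endpoint $\theta=1$ by continuity instead of separately. One small slip: it is $\theta=1$, not $\theta=\alpha$, that reproduces the exponent $\alpha^2/4$, since $g(1)=1/4$.
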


\begin{proof}
Let $d=\lfloor\alpha n\rfloor$. 
Choose
\[
  \max\{\alpha,1/2\}<\theta<1,
  \qquad
  0<\lambda<\min\left\{\frac12,1-h(\theta)\right\}.
\]
Let
\[
  s:=\left\lfloor\frac d\theta\right\rfloor,
  \qquad
  k:=\lfloor\lambda s\rfloor.
\]
Then $s=(\alpha/\theta+o(1))n$ and $s\le n$ for all sufficiently large $n$. 

Fix $S\subseteq[n]$ with $|S|=s$, and let $G$ be a uniformly random binary $k\times s$ matrix. 
For every $0\ne u\in\F_2^k$, $\wt(uG)\sim\operatorname{Bin}\left(s,\frac12\right)$.
Since $(d+1)/s=\theta+o(1)$, \Cref{lem:binomial-tail} and a union
bound give
\[
\begin{aligned}
  \mathbb P\bigl(\wt(uG)>d\bigr)
  &\le 2^{-(1-h(\theta)+o(1))s},\\
\implies  \mathbb P\bigl(\exists\,u\ne0:\wt(uG)>d\bigr)
  &\le 2^{(\lambda-(1-h(\theta))+o(1))s}
   =o(1).
\end{aligned}
\]

If $\rank(G)<k$, then there is some
$0\ne u\in\F_2^k$ such that $uG=0$.
For a fixed $u\ne0$, the coordinates of $uG$ are independent and hence $\mathbb P(uG=0)=2^{-s}$.

Taking a union bound over the $2^k-1$ non-zero choices of $u$ gives
\[
  \mathbb P\bigl(\rank(G)<k\bigr)
  \le
  (2^k-1)2^{-s}
  \le
  2^{k-s}
  =
  o(1).
\]
Also, a fixed column of $G$ is zero with probability $2^{-k}$, so
\[
  \mathbb P\bigl(\text{$G$ has a zero column}\bigr)
  \le
  s2^{-k}
  =
  o(1).
\]
Thus, with probability $1-o(1)$, the row span of $G$ is a
$k$-dimensional full-support binary linear $d$-anticode on $S$.

Every $k$-dimensional code is generated by as many full-rank
$k\times s$ matrices as there are ordered bases of the code.
The number of such ordered bases is exactly
\[
  (2^k-1)(2^k-2)\cdots(2^k-2^{k-1})
  \le 2^{k^2}.
\]
Therefore the number of resulting codes is at least
\[
  \frac{(1-o(1))2^{ks}}{2^{k^2}}.
\]
Consequently for every fixed admissible pair $(\theta,\lambda)$ we have,
\[
 \log_2|\LL_2(n,d)|
  \ge ks-k^2-o(1)
  \ge
  \left(
    \left(\frac{\alpha}{\theta}\right)^2
    \lambda(1-\lambda)+o(1)
  \right)n^2.
\]
Therefore,
\[
\liminf_{n\to\infty}
\frac{\log_2|\LL_2(n,\lfloor\alpha n\rfloor)|}{n^2}
\ge
\left(\frac{\alpha}{\theta}\right)^2\lambda(1-\lambda).
\]
Taking the supremum over
$0<\lambda<\min\{1/2,1-h(\theta)\}$ gives
$(\alpha/\theta)^2g(\theta)$.
Taking the supremum over
$\max\{\alpha,1/2\}<\theta<1$ and using continuity gives
$\phi(\alpha)$.
Therefore,
\[
  |\LL_2(n,d)|
  \ge
  2^{\left((\alpha/\theta)^2g(\theta)+o(1)\right)n^2}
\]
for every $\max\{\alpha,1/2\}<\theta<1$.

For $\theta=1$, take $s=d$, $r=\lfloor s/2\rfloor$, and let
$N_s$ be the number of full-support $r$-dimensional subspaces of
$\F_2^s$. Then
\[
  \bqbinom sr-s\bqbinom{s-1}r
  \le N_s\le\bqbinom sr.
\]
By \Cref{lem:gaussian-estimates},
\[
  N_s=\Theta\!\left(2^{s^2/4}\right),
  \qquad
  |\LL_2(n,d)|
  \ge
  2^{(\alpha^2/4+o(1))n^2}.
\]

By continuity, the lower endpoint is obtained by taking $\theta$
towards $\max\{\alpha,1/2\}$, while the case $\theta=1$
was handled separately. Taking the supremum over all admissible
$\theta$ proves the result.

\end{proof}


\begin{figure}[htbp]
\centering
\makebox[\linewidth][c]{%
\begin{tikzpicture}[
    scale=0.82,
    every node/.style={font=\normalsize},
    arrow/.style={
        -{Latex[length=3mm,width=2.2mm]},
        line width=0.9pt,
        blue!70!black
    }
]

\colorlet{mainblue}{blue!70!black}
\colorlet{softblue}{blue!18}
\colorlet{mainorange}{orange!85!red}

\def\drawbrackets#1#2#3#4{%
    \draw[mainblue,line width=0.9pt]
        ({#1+0.10},{#2+#4})
        -- (#1,{#2+#4})
        -- (#1,#2)
        -- ({#1+0.10},#2);
    \draw[mainblue,line width=0.9pt]
        ({#1+#3-0.10},{#2+#4})
        -- ({#1+#3},{#2+#4})
        -- ({#1+#3},#2)
        -- ({#1+#3-0.10},#2);
}


\node[text=mainblue,font=\large] at (0,5.9) {
    $\displaystyle
    S\subseteq[n],
    \qquad |S|=s\approx d/\theta
    $
};

\def\xstart{-2.75}
\def\ystart{4.95}
\def\w{0.55}
\def\h{0.48}

\foreach \i in {0,...,9} {
    \draw[mainblue,line width=0.7pt]
        ({\xstart+\i*\w},\ystart)
        rectangle ++(\w,\h);
}

\foreach \i in {4,5,6} {
    \draw[mainorange,fill=orange!10,line width=0.9pt]
        ({\xstart+\i*\w},\ystart)
        rectangle ++(\w,\h);
}

\foreach \i/\lab in {
    0/1,1/2,2/\cdots,5/\cdots,8/\cdots,9/n
} {
    \node at ({\xstart+(\i+0.5)*\w},{\ystart+0.24})
        {$\lab$};
}

\draw[
    decorate,
    decoration={brace,mirror,amplitude=6pt},
    mainorange,
    line width=0.9pt
]
    ({\xstart+4*\w},{\ystart-0.08})
    -- ({\xstart+7*\w},{\ystart-0.08});

\node[text=mainorange,font=\large]
    at ({\xstart+5.5*\w},{\ystart-0.58}) {$S$};

\draw[arrow] (0,3.75) -- (0,2.5);


\node[text=mainblue,font=\large,anchor=east]
    at (-1.65,1.50) {$G=$};

\def\mx{-1.35}
\def\my{0.70}
\def\mw{2.70}
\def\mh{1.60}

\fill[softblue!45]
    (\mx,\my) rectangle ++(\mw,\mh);

\foreach \i in {1,...,4} {
    \draw[white,line width=0.8pt]
        ({\mx+\i*\mw/5},\my)
        -- ({\mx+\i*\mw/5},{\my+\mh});
}

\foreach \i in {1,2,3} {
    \draw[white,line width=0.8pt]
        (\mx,{\my+\i*\mh/4})
        -- ({\mx+\mw},{\my+\i*\mh/4});
}

\drawbrackets{\mx-0.12}{\my}{\mw+0.24}{\mh}

\foreach \y in {2.07,1.67} {
    \node at (-0.50,\y) {$\cdots$};
    \node at ( 0.60,\y) {$\cdots$};
}

\node at (-0.50,1.05) {$\vdots$};
\node at ( 0.00,1.05) {$\ddots$};
\node at ( 0.60,1.05) {$\vdots$};

\draw[
    decorate,
    decoration={brace,mirror,amplitude=6pt},
    mainblue,
    line width=0.9pt
]
    (\mx,0.57) -- ({\mx+\mw},0.57);

\node[text=mainblue,font=\large] at (0,0.15) {$s$};

\draw[
    decorate,
    decoration={brace,mirror,amplitude=6pt},
    mainblue,
    line width=0.9pt
]
    (1.72,\my) -- (1.72,{\my+\mh});

\node[text=mainblue,font=\large] at (2.13,1.50) {$k$};

\end{tikzpicture}%
}

\caption{$C=\operatorname{rowspan}(G)$ is a linear $d$-anticode of dimension-$k$ w.h.p.}
\label{fig:probabilistic-construction}
\end{figure}

\begin{remark}\label{rem:binary-random-improvement}
The bound in \Cref{prop:binary-random-linear-lower} strictly improves
the exponent $\alpha^2/4$ in \Cref{thm:linear-main}, for every
$0<\alpha<1$. Indeed, choose $\theta<1$ sufficiently close to $1$
so that $\theta>\alpha$ and $h(\theta)<1/2$. Then
\[
  \left(\frac{\alpha}{\theta}\right)^2g(\theta)
  =
  \frac{\alpha^2}{4\theta^2}
  >
  \frac{\alpha^2}{4}.
\]
\end{remark}
\begin{corollary}\label{cor:binary-exact-exponent}
Let $\alpha_0\in(1/2,1)$ be the unique solution of $h(\alpha_0)=\frac12$.
For every fixed $\alpha_0\le\alpha<1$,
\[
  |\LL_2(n,\lfloor\alpha n\rfloor)|
  =
  2^{(1/4+o(1))n^2}.
\]
\end{corollary}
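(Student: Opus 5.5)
The upper bound is immediate. Every linear anticode is a subspace of $\F_2^n$, so \Cref{lem:gaussian-estimates}(i) gives
\[
|\LL_2(n,\lfloor\alpha n\rfloor)|\le\sum_{r=0}^n\bqbinom nr=2^{n^2/4+O(\log n)}.
\]
It therefore suffices to show that $\phi(\alpha)\ge 1/4$ for every $\alpha\in[\alpha_0,1)$. The lower bound then follows from \Cref{prop:binary-random-linear-lower}.

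The natural choice is $\theta=\alpha_0$ itself, the largest point where $g$ equals $1/4$. Admissibility requires $\max\{\alpha,1/2\}\le\theta\le1$, and $\alpha\ge\alpha_0>1/2$, so $\theta=\alpha_0$ is admissible only when $\alpha=\alpha_0$. In that case $(\alpha/\theta)^2g(\theta)=1\cdot h(\alpha_0)(1-h(\alpha_0))=1/4$, as required.

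For $\alpha>\alpha_0$ we must have $\theta\ge\alpha>\alpha_0$. There $h(\theta)<1/2$, since $h$ is decreasing on $[1/2,1]$ and $h(\alpha_0)=1/2$. Hence $g(\theta)=1/4$ and $(\alpha/\theta)^2g(\theta)=\alpha^2/(4\theta^2)$, which is maximized at $\theta=\alpha$ with value exactly $1/4$. So $\phi(\alpha)\ge1/4$ on all of $[\alpha_0,1)$.

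The main point to verify is that \Cref{prop:binary-random-linear-lower} really covers the boundary choice $\theta=\max\{\alpha,1/2\}$, since its proof uses strict inequalities $\theta>\alpha$. The proof handles this only by a limiting argument, via the phrase "by continuity". I would make this explicit. For each $\varepsilon>0$, take $\theta\in(\alpha,\alpha+\varepsilon)$ with $h(\theta)<1/2$ and $\lambda<1/2$ close to $1/2$. This requires $\lambda<1-h(\theta)$, which holds because $1-h(\theta)>1/2$. The proposition's proof then yields a liminf of at least $(\alpha/\theta)^2\lambda(1-\lambda)$, which exceeds $1/4-\varepsilon'$ for small $\varepsilon$. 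Letting $\varepsilon\to0$ shows the liminf is at least $1/4$.

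At $\alpha=\alpha_0$ the same approach works with $\theta\downarrow\alpha_0$, using continuity of $h$. In this case $\lambda\to\min\{1/2,1-h(\theta)\}\to1/2$.

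Combining the liminf bound with the upper bound gives $\log_2|\LL_2(n,\lfloor\alpha n\rfloor)|=(1/4+o(1))n^2$.
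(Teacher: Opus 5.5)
Your proof is correct and matches the paper's argument: the upper bound from counting all subspaces is what the $\gamma=\min\{1,2\alpha\}=1$ case of \Cref{thm:linear-main} reduces to. The lower bound is the paper's computation $\phi(\alpha)=\max_{\theta\ge\alpha}\alpha^2/(4\theta^2)=1/4$, and your explicit $\theta\downarrow\alpha$, $\lambda\uparrow 1/2$ limit spells out what the paper calls continuity. One harmless slip: $g\equiv 1/4$ on $[\alpha_0,1]$, so $\alpha_0$ is the \emph{smallest}, not the largest, point where $g$ equals $1/4$.
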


\begin{proof}
If $\alpha\ge\alpha_0$, then every admissible $\theta\ge\alpha$
satisfies $h(\theta)\le1/2$, and hence $g(\theta)=1/4$. Therefore
\[
  \phi(\alpha)
  =
  \max_{\alpha\le\theta\le1}
  \frac{\alpha^2}{4\theta^2}
  =
  \frac14.
\]
Thus \Cref{prop:binary-random-linear-lower} gives
\[
  |\LL_2(n,\lfloor\alpha n\rfloor)|
  \ge
  2^{(1/4+o(1))n^2}.
\]
On the other hand, for $q=2$ and $\alpha\ge1/2$, the parameter in
\Cref{thm:linear-main} is $\gamma=\min\{1,2\alpha\}=1$, so
\[
  |\LL_2(n,\lfloor\alpha n\rfloor)|
  \le
  2^{(1/4+o(1))n^2}.
\]
\end{proof}

\begin{remark}
Every subspace of a code produced by the above construction is again a linear $d$-anticode.
This introduces an overcounting issue: the same subspace may be contained in several distinct constructed codes.
For the parameters used here, counting these additional subspaces cannot improve the leading exponent.
Indeed, $k\le s/2$, and \Cref{lem:gaussian-uniform} gives the following.
\[
  \sum_{r=0}^{k}\genfrac{[}{]}{0pt}{}{s}{r}_2
  \le
  C_2(k+1)2^{k(s-k)}.
\]
Thus even all subspaces of dimension at most $k$ on the
fixed set of $s$ coordinates have the same leading
exponent as the family already counted.
\end{remark}

\section{Conclusion, further discussion, and open problems }\label{sec:further-directions}

We conclude this paper by explaining the broader implications of our counts, stating some related results on the number of anticodes, and some ideas for further research.

\subsection{Broader implications}

Our results reveal an interesting difference between the asymptotic behavior of general and linear anticodes. 
In the general setting, for $4\le d\le n-1$, excluding the binary endpoint $q=2$ and $d=n-1$, the asymptotic number of
anticodes of a prescribed diameter is governed, on the logarithmic scale, by the number of subsets of one maximum anticode.
In other words, counting these subsets already gives the correct leading term in the logarithm of the total count. 
As a consequence, the number of inclusion-maximal anticodes is asymptotically negligible compared with the total number of anticodes.

The situation is different in the linear setting. 
Counting subspaces of one maximum linear anticode need not give the correct leading term on the logarithmic scale.
We organize all linear anticodes according to their support, using the one-to-one correspondence between codes with a prescribed support and full-support codes on those coordinates.
This decomposition gives our asymptotic estimates and bounds.
These contrasting behaviors demonstrate that imposing linearity fundamentally changes the asymptotic growth of the number of anticodes.

\subsection{Asymptotics for growing $q$}

The asymptotic results above keep $q$ fixed while $n\to\infty$.
It is natural to ask what happens when we fix $n$ and let the field size grow.

\paragraph*{General anticodes}
For general anticodes, the Helly bound in \Cref{thm:AJS} does not depend on $q$.  
As a first simple case,
fix $n$ and $1\le d<n$, and let $q\to\infty$.  
By
\Cref{lem:maximal-count} and $|B(x,d)|\le q^n$, $$\log_2|\mM_q(n,d)|=  O_{n,d}(\log q).$$
On the other hand, \Cref{lem:coordinate-subcube-lower} gives $m_q(n,d)\ge q^d$. 
Therefore, we get the same result as in Section $\ref{sec:nonlinear}$, namely
$$|\mA_q(n,d)|=2^{(1+o_q(1))m_q(n,d)},$$
where the subscript in $o_q$ indicates that the asymptotics are for growing $q$. 

\paragraph*{Linear anticodes}
For the linear anticode count, as $q\to\infty$, the possible support size approaches $d$.
For fixed $n$ and $d$, this gives
\[
  \log_q|\LL_q(n,d)|
  =
  \left\lfloor\frac{d^2}{4}\right\rfloor+o_q(1).
\]
If $q=q(n)\to\infty$ through prime powers and
$d=\lfloor\alpha n\rfloor$ with fixed $0<\alpha<1$, we get
\[
  |\LL_q(n,d)|
  =
  q^{(\alpha^2/4+o(1))n^2}.
\]
The support bound gives $|\supp(C)|  \le   \left(\alpha+o(1)\right)n$ and the lower bound is again obtained from subspaces supported on $d$ coordinates.
Thus, the gap in the exponents of the lower and the upper bounds in linear regime, disappears when
$q\to\infty$.

\paragraph*{Open problem}
The next interesting task would be to determine the general count when $q=q(n)$ and $d=d(n)$ vary simultaneously. We leave this for future research:
\begin{problem}
Determine the asymptotic behavior for $|\mA_{q(n)}(n,d(n))|$ and $|\LL_{q(n)}(n,d(n))|$ when the field size $q$ and the diameter $d$ grow simultaneously.
\end{problem}

\subsection{Restricted distances}
An extension of the problems considered in this paper is to impose simultaneously a lower and an upper bound on all pairwise distances.
For integers $0\le d_1\le d_2\le n$, define
\[
    \mathcal C_q(n;d_1,d_2)
    :=
    \left\{
        C\subseteq \F_q^n : |C|\ge2, \;
        d_1\le \dist(x,y)\le d_2
        \text{ for all distinct }x,y\in C
    \right\}.
\]
Equivalently, every $C\in\mathcal C_q(n;d_1,d_2)$ satisfies
\[
    d_{\min}(C)\ge d_1
    \qquad\text{and}\qquad
    \diam(C)\le d_2.
\]
Thus, this class sits between the usual minimum-distance condition for error-correcting codes and the maximum-distance condition that defines anticodes.
Codes satisfying such a condition may be viewed as codes with \emph{restricted distances} or \emph{interval-distance codes}. 

The  analogous object to study for linear codes is
\[
    \mathcal L_q(n;d_1,d_2)
    :=
    \left\{
        0 \ne C\le\F_q^n :
        d_1\le \wt(c)\le d_2
        \text{ for every }0\ne c\in C
    \right\}.
\]
Indeed, for a linear code $C$, $d_{\min}(C) = \min_{0\ne c\in C}\wt(c),$ and $\diam(C)=\max_{0\ne c\in C}\wt(c)$.

We pose the following enumeration problems for future research: 

\begin{problem}\;
\begin{enumerate}
    \item 
Determine the asymptotic behavior of $|\mathcal C_q(n;d_1,d_2)|$.
    \item
Determine the asymptotic behavior of $|\mathcal L_q(n;d_1,d_2)|$.
\end{enumerate}
\end{problem}
\medskip

\textbf{Acknowledgments.}
The second author thanks Nikolaas D.~Verhulst for helpful discussions during the early stages of this project.
The random-generator construction used for binary linear anticodes originated from an idea suggested by Noga Alon to Anurag Bishnoi in an email correspondence approximately two years earlier, in connection with a different problem concerning vector-space Ramsey numbers.

\textbf{AI declaration.} 
The second author used AI (various models) for literature searches.
It brought the work of Alon, Jin, and Sudakov~\cite{AlonJinSudakov2024} on the Helly number of Hamming balls to the author's attention.
One of its suggestions was to use the support of a linear anticode as the organizing parameter for its enumeration. 
Furthermore, AI models were used for language correction and stylistic improvements.


\medskip

\begin{thebibliography}{99}

\bibitem{AhlswedeKatona1977}
R. Ahlswede and G.~O.~H. Katona,
\newblock Contributions to the geometry of Hamming spaces,
\newblock \emph{Discrete Math.} \textbf{17} (1977), no.~1, 1--22.

\bibitem{AhlswedeKhachatrian1998}
R.~Ahlswede and L.~H. Khachatrian,
\newblock The diametric theorem in Hamming spaces---optimal anticodes,
\newblock \emph{Advances in Applied Mathematics} \textbf{20} (1998), no.~4, 429--449.

\bibitem{AhlswedeAydinianKhachatrian2001}
R.~Ahlswede, H.~K. Aydinian, and L.~H. Khachatrian,
\newblock On perfect codes and related concepts,
\newblock \emph{Designs, Codes and Cryptography}
\textbf{22} (2001), no.~3, 221--237.

\bibitem{AlonJinSudakov2024}
N.~Alon, Z.~Jin, and B.~Sudakov,
\newblock The Helly number of Hamming balls and related problems,
\newblock arXiv:2405.10275, 2024.

\bibitem{moduleDensity}{
E. Byrne, A.-L. Horlemann, K. Khathuria, and V. Weger,
\newblock Density of Free Modules over Finite Chain Rings,
\newblock \emph{Linear Algebra and its Applications}, \textbf{651} (2022), 1--25.
}

\bibitem{Das2016Binomial}
S.~Das,
\newblock A brief note on estimates of binomial coefficients,
\newblock
\url{https://discretemath.imp.fu-berlin.de/DMI-2016/notes/binomials.pdf}.

\bibitem{DongManiZhao2023}
D.~Dong, N.~Mani, and Y.~Zhao,
\newblock On the number of error correcting codes,
\newblock \emph{Combinatorics, Probability and Computing}
\textbf{32} (2023), no.~5, 819--832.

\bibitem{Frankl2017}
P. Frankl,
\newblock A stability result for families with fixed diameter,
\newblock \emph{Combin. Probab. Comput.} \textbf{26} (2017),
no.~4, 506--516.

\bibitem{DiGiustoGorlaRavagnani2025}
A.~Di Giusto, E.~Gorla, and A.~Ravagnani,
\newblock A distance-free approach to generalized weights,
\newblock arXiv:2512.17542, 2025.

\bibitem{densities}{
A. Gruica, A.-L. Horlemann, A. Ravagnani, and N. Willenborg,
\newblock Densities of Codes of Various Linearity Degrees in Translation-Invariant Metric Spaces,
\newblock \emph{Designs, Codes and Cryptography}, \textbf{92} (2024), 609--637.
}

\bibitem{HallNotes}
J.~I. Hall,
\newblock \emph{Notes on Coding Theory}.
\newblock 2003.

\bibitem{HuffmanPless2003}
W.~C. Huffman and V.~Pless,
\newblock \emph{Fundamentals of Error-Correcting Codes},
\newblock Cambridge University Press, 2003.


\bibitem{Ihringer2015}
F.~Ihringer,
\newblock \emph{Finite Geometry Intersecting Algebraic Combinatorics:
An Investigation of Intersection Problems Related to
Erd\H{o}s--Ko--Rado Theorems on Galois Geometries with Help from
Algebraic Combinatorics},
\newblock Ph.D. thesis, Justus-Liebig-Universit\"at Gie{\ss}en, 2015.


\bibitem{Kleitman1966}
D.~J. Kleitman,
\newblock On a combinatorial conjecture of Erd\H{o}s,
\newblock \emph{Journal of Combinatorial Theory} \textbf{1} (1966), no.~2, 209--214.

\bibitem{Ravagnani2016}
A.~Ravagnani,
\newblock Generalized weights: an anticode approach,
\newblock \emph{Journal of Pure and Applied Algebra} \textbf{220} (2016), no.~5, 1946--1962.
\newblock arXiv:1410.7207.

\bibitem{tLeeCodes}{
N. Willenborg, A.-L. Horlemann, and V. Weger,
\newblock On the Number of t-{L}ee-error-correcting Codes,
\newblock \emph{Advances in Mathematics of Communications}, vol. \textbf{18} (2024), no.~2, 567--594.
} 

\bibitem{Zhao2010}
Y.~Zhao,
\newblock The number of independent sets in a regular graph,
\newblock \emph{Combin. Probab. Comput.} \textbf{19} (2010), no.~2, 315--320.

\end{thebibliography}
\end{document}